\documentclass[12pt,a4paper,reqno]{amsart}

\usepackage[margin=2.5cm]{geometry}
\setlength{\textheight}{10in}

\usepackage{graphics, amsmath,enumitem, comment, url}
\usepackage{graphicx}
\usepackage{epsfig}

\setlength{\parskip}{.75ex}

\newif\ifcolorcomments
\newcommand{\allowcomments}[4]{
\newcommand{#1}[1]{\ifdraft{\ifcolorcomments{\textcolor{#4}{##1 --#3}}\else{\textsl{ ##1 \ --#3}}\fi}\else{}\fi}
}

\allowcomments{\commumtaz}{MH}{Mumtaz}{green}
\allowcomments{\comwang}{BW}{BWWang}{blue}
\allowcomments{\comkle}{DK}{DK}{magenta}
\allowcomments{\comnick}{NW}{Nick}{red}

\colorcommentstrue
\usepackage{amssymb}
\usepackage{amsmath,amsthm}
\usepackage{mathrsfs}
\usepackage{bbm}
\def\bc{\begin{center}}
\def\ec{\end{center}}
\def\be{\begin{equation}}
\def\ee{\end{equation}}

\def\N{\mathbb N}
\def\Z{\mathbb Z}
\def\Q{\mathbb Q}

\def\R{\mathbb R}

\def\H{\mathcal H}
\def\HH{\mathcal H}
\DeclareMathOperator{\diam}{diam}

\newcommand{\given}{\upharpoonleft}
\DeclareMathOperator{\Supp}{Supp}
\newcommand\hdim{\dim_{\mathcal H}}

\def\Z{\mathbb Z}

\newtheorem{lem}{Lemma}[section]

\newtheorem{pro}[lem]{Proposition}
\newtheorem{thm}[lem]{Theorem}

\newtheorem{cor}[lem]{Corollary}
\newtheorem{claim}[lem]{Claim}
\newtheorem{rem}[lem]{Remark}
\numberwithin{equation}{section}
\usepackage{colortbl}

\newif\ifdraft\drafttrue

\newcommand{\subscript}[2]{$#1 _ #2$}

\begin{document}
%\title{Question 3}

%\author{}
%\address{}
%\email{}

%\keywords {}

\subjclass[2010]  {}

\title[Hausdorff measure of Dirichlet non-improvable numbers]{The Generalised Hausdorff measure of sets of Dirichlet non-improvable numbers}

\author[P. Bos]{Philip Bos}

\address{Philip Bos, Department of Mathematical and Physical Sciences, La Trobe University, Bendigo 3552, Australia. }  \email{phil@philbos.com}

\author[M. Hussain]{Mumtaz Hussain}
\address{Mumtaz Hussain, Department of Mathematical and Physical Sciences, La Trobe University, Bendigo 3552, Australia. } \email{M.Hussain@latrobe.edu.au}
%\author[D. Kleinbock]{D.~Kleinbock}
%\address {Dmitry Kleinbock, Brandeis University, Waltham MA 02454-9110.} \email{kleinboc@brandeis.edu}
\author[D. Simmons]{David Simmons}
\address{David Simmons,  The University of York, England, UK}  \email{David.Simmons@york.ac.uk}
%\author[B-W. Wang]{B-W.~Wang}
%\address{Bao-wei Wang, School  of  Mathematics  and  Statistics,  Huazhong  University  ofScience  and  Technology, 430074 Wuhan, China}
% \email{bwei\_wang@hust.edu.cn}

%\thanks{The research of  M.~Hussain is supported by the Endeavour Fellowship,  of D.\ Kleinbock  by NSF grant DMS-1600814, and of B-W.~Wang by NSFC of China (No.\ 11471130 and NCET-13-0236).}

\maketitle

\begin{abstract} 

Let $\psi:\mathbb R_+\to\mathbb R_+$ be a non-increasing function. A real number $x$ is said to be $\psi$-Dirichlet improvable
if the system $$|qx-p|< \, \psi(t) \ \ {\text{and}} \ \ |q|<t$$
has a non-trivial integer solution for all large enough $t$. Denote the collection of such points by $D(\psi)$.  In this paper, we prove a zero-infinity law valid for all dimension functions under natural non-restrictive conditions. Some of the consequences are zero-infinity laws,   for all essentially sub-linear dimension functions proved by Hussain-Kleinbock-Wadleigh-Wang (2018),   for some non-essentially sub-linear dimension functions, and for all dimension functions but with a growth condition on the approximating function.

%remove this restriction and prove the generalised Hausdorff $f$-measure of the set $D^c(\psi)$  for all dimension functions under natural non-restrictive conditions. Hence, together with the Hussain-Kleinbock-Wadleigh-Wang (2018) result, we provide a near complete characterisation of the generalised Hausdorff $f$-measure of the set $D^c(\psi)$.

%picture  This result also gives information for the Hausdorff measure of classical $\psi$-approximable numbers for general dimension functions.

% Together with the Lebesgue measure-theoretic results established by Kleinbock \& Wadleigh (2016), our results contribute to building a complete metric theory for the
% set of Dirichlet non-improvable numbers.

\end{abstract}

\section{ Introduction}

\noindent At its most fundamental level, the theory of Diophantine approximation is concerned with the question of how well a real number can be approximated by rationals. A qualitative answer is provided by the fact that the set of rational numbers is dense in the real numbers. Seeking a quantitative answer leads to the theory of metric Diophantine approximation. Dirichlet's theorem (1842) is the starting point in this theory.

%To simplify the presentation, we start by fixing some notation. We use $a\gg b$ to indicate that $|a/b|$ is sufficiently large, and $a\asymp b$ to indicate that $|a/b|$ is bounded between unspecified positive constants. We use $\lambda(\cdot)$,  $\hdim$ and $\mathcal{H}^s$ to denote the Lebesgue measure, Hausdorff dimension, and $s$-dimensional Hausdorff measure, respectively. We use `i.m.' for {\em `infinitely many'.}
%
%\subsection{Diophantine approximation: improving Dirichlet's theorem }

\begin{thm}[Dirichlet, 1842]\label{Dir}
\label{Dirichletsv}
 \noindent Given $x\in \R$ and $t>1
$, there exist integers $p,q$
 such that
  \begin{equation*}\label{eqdir} \left\vert qx-p\right\vert\leq 1/t \quad{\rm and} \quad  1\leq{q}<{t}. \end{equation*}

\end{thm}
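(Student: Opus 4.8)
The plan is to prove Dirichlet's theorem by the pigeonhole principle (Dirichlet's box principle), which is the classical and most self-contained argument. The only genuine subtlety is an off-by-one bookkeeping issue: one must choose the number of boxes so that \emph{simultaneously} the denominator $q$ produced is \emph{strictly} less than $t$ and the approximation error is at most $1/t$.

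First I would set $N=\lceil t\rceil-1$, the largest integer strictly smaller than $t$. Since $t>1$ we have $N\geq 1$, and since $N+1=\lceil t\rceil\geq t$ we have $1/(N+1)\leq 1/t$. Next, consider the $N+2$ real numbers
\[ 0,\quad \{x\},\quad \{2x\},\quad \ldots,\quad \{Nx\},\quad 1, \]
all lying in $[0,1]$, where $\{\cdot\}$ denotes the fractional part, together with the $N+1$ subintervals $\big[\tfrac{j}{N+1},\tfrac{j+1}{N+1}\big]$ for $j=0,\dots,N$. By the pigeonhole principle, two of these $N+2$ numbers lie in a common subinterval, and hence differ by at most $1/(N+1)$.

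It remains to run a short case analysis on which two numbers are close. If they are $\{ix\}$ and $\{kx\}$ with $0\leq i<k\leq N$, put $q=k-i$; if one of the close pair is $0$ or $1$ and the other is $\{kx\}$ with $1\leq k\leq N$, put $q=k$; the case in which the close pair is $\{0,1\}$ itself cannot occur, because $1>1/(N+1)$ for $N\geq 1$. In every case $1\leq q\leq N<t$, and for the appropriate choice of integer $p$ (a difference of the relevant integer parts) one gets $|qx-p|\leq 1/(N+1)\leq 1/t$, which is exactly the required conclusion. I expect the main --- and very mild --- obstacle to be precisely this edge-case bookkeeping around the endpoints $0,1$ and the strictness of $q<t$; the rest is immediate. (Alternatively, the statement follows from Minkowski's convex body theorem applied to a suitable centrally symmetric parallelogram of area $4$, but the pigeonhole argument is cleaner and keeps the paper self-contained.)
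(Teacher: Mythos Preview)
Your pigeonhole argument is correct and is the standard classical proof of Dirichlet's theorem; the bookkeeping with $N=\lceil t\rceil-1$ cleanly handles the strict inequality $q<t$ and the bound $1/(N+1)\le 1/t$, and your case analysis covering the endpoint $1$ is sound.

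There is, however, nothing to compare against: the paper does not supply its own proof of this statement. Theorem~\ref{Dir} is quoted in the introduction purely as historical background (``the starting point in this theory''), and the paper proceeds immediately to its corollary and to the sets $\mathcal K(\Psi)$ and $D^c(\psi)$ without ever returning to prove Dirichlet's theorem itself. Your write-up would serve perfectly well if the paper wished to be self-contained on this point, but as written the authors simply take it as known.
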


An easy consequence (known before Dirichlet) is the following global statement concerning the `rate' of rational approximation to any real number.
\begin{cor}
 \noindent For any $x\in \R$, there exist infinitely many integers $p$ and $q > 0 $ such that
\begin{equation*}\label{side1}
\left\vert qx-p\right\vert<1/q.
\end{equation*}
\end{cor}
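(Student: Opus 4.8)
The plan is to deduce the corollary directly from Theorem~\ref{Dir} by letting the parameter $t$ grow without bound and harvesting distinct solutions along the way. First I would fix $x\in\R$ and, for each real $t>1$, invoke Dirichlet's theorem to obtain integers $p=p(t)$ and $q=q(t)$ with $|qx-p|\le 1/t$ and $1\le q<t$. The key elementary observation is that $q<t$ forces $1/t<1/q$, so the pair produced automatically satisfies the strict inequality $|qx-p|\le 1/t<1/q$ demanded by the corollary. Thus each application of Theorem~\ref{Dir} yields at least one admissible pair $(p,q)$, and the entire task reduces to showing that, as $t$ varies, infinitely many \emph{distinct} pairs occur.

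The only real content is ruling out finiteness of the solution set, which I would do by contradiction, splitting into the rational and irrational cases. If $x=a/b$ with $b\ge 1$, then the pairs $(p,q)=(na,nb)$ for $n\in\N$ already give infinitely many solutions, since then $|qx-p|=0<1/q$; so assume $x$ is irrational, in which case $|qx-p|>0$ for every integer pair with $q\ge 1$. Were the solution set finite, say $\{(p_1,q_1),\dots,(p_N,q_N)\}$, I would set $\delta:=\min_{1\le i\le N}|q_ix-p_i|>0$, pick any $t>1/\delta$, and apply Theorem~\ref{Dir} at this $t$ to get a pair $(p,q)$ with $|qx-p|\le 1/t<\delta$; as noted above this pair also satisfies $|qx-p|<1/q$, hence lies in the solution set, yet its defect $|qx-p|$ is strictly smaller than that of every listed solution — a contradiction. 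Therefore infinitely many solutions exist.

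I do not anticipate a genuine obstacle: this is the standard textbook derivation, and the only points requiring care are the bookkeeping that converts the non-strict bound $|qx-p|\le 1/t$ into the strict bound $|qx-p|<1/q$, and the need to treat rational $x$ separately (for rational $x$ the quantity $\delta$ degenerates to $0$, so the limiting argument cannot be applied verbatim). A marginally slicker alternative would avoid the explicit case split by observing that finiteness of the solution set would bound the defects $|q_ix-p_i|$ away from zero \emph{unless} some $q_ix-p_i=0$, and in that exceptional case the integer multiples of $(p_i,q_i)$ already supply infinitely many solutions; but the two-case presentation is the cleanest to write out in full.
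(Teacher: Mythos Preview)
Your proposal is correct and is precisely the standard textbook derivation of this corollary from Dirichlet's theorem. The paper itself does not supply a proof: it merely records the statement as ``an easy consequence (known before Dirichlet)'' and moves on, so there is nothing to compare against beyond noting that your argument is the expected one. The bookkeeping you flag---converting $|qx-p|\le 1/t$ into $|qx-p|<1/q$ via the strict bound $q<t$, and handling rational $x$ separately---is exactly right and leaves no gap.
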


A strengthening of this corollary is the classical $\Psi$-approximable set, here stated with a slight change of notation. Let $\Psi:[q_0, \infty)\to \mathbb R_+$  be a  non-decreasing function with $q_0\geq 1$  fixed. Then
\begin{equation*}%\label{eq1}
\mathcal K(\Psi):=\left\{x\in\mathbb [0, 1):  \left|x-\frac{p}{q}\right|\, <\, \frac1{q^2\Psi(q)}, \text{ for infinitely many} \ (p, q) \in \Z\times\N\right\}.
\end{equation*}
 A comprehensive metrical theory (Lebesgue measure, Hausdorff measure and Hausdorff dimension) for the set $\mathcal K(\Psi)$ is well-known, see for example \cite{BV06}. To transition into the statement of results we first introduce necessary notations.

\noindent{\bf Notation.} Here and throughout by a {\sl dimension function}, we mean an increasing, continuous function $f:\mathbb R_+\to \mathbb R_+$ such that $f(r)\to 0$ as $r\to 0$. For real quantities $A,B$ that
depend on parameters, we write  $A \lesssim B$ if $A \leq c B$ for a constant $c> 0$ that is independent of those parameters. 
 We write $B\asymp A$ if $A\lesssim B \lesssim A$. To simplify the presentation, we start by fixing some notation. We use $A\gg B$ to indicate that $|A/B|$ is sufficiently large.  We use $\mathcal L(\cdot)$,  $\hdim$ and $\mathcal{H}^f$ to denote the Lebesgue measure, Hausdorff dimension, and $f$-dimensional Hausdorff measure, respectively.

We state the most modern result from the seminal paper of Beresnevich-Velani \cite[Theorem 2]{BV06} with a slight improvement as noted in \cite[Theorem 2.6]{HKWW}.

\begin{thm}[Jarn\'ik, 1931]\label{Jarnik}%[\cite{Jar31}, Jarn\'ik 1931]
Let $\psi$ be a non-increasing function, and let $f$ be a dimension function satisfying %\eqref{nonincr} and
the following properties:
%$f(x)/x$ is non-increasing, and
\begin{equation}\label{f2infty}
\lim_{x\to 0}\frac{f(x)}{x}=\infty,
\end{equation} %for a universal constant $C>0$.
and \begin{equation}\label{ff2}
 \exists\, C \ge 1  \text{ such that } \frac{f(x_2)}{x_2}\le C\frac{f(x_1)}{x_1}  {\text{ whenever}}\ x_1< x_2\ll 1.
\end{equation} %for a universal constant $C>0$.
%$f(x)/x\to\infty$ as $x\to 0$.
Then
\begin{equation*}\label{sum1} \H^f\big(\mathcal K(\Psi)\big)=\begin {cases}
 0 \ & {\rm if } \quad
 %\sum\limits_{q=1}^{\infty}q\left(r^2\Psi(q)\right)^{-s}
 \sum\limits_{t} {t}f\left(\frac{1}{t^2\Psi(t)}\right) \, < \, \infty; \\[2ex]
 \infty \ & {\rm if } \quad
 %\sum\limits_{q=1}^{\infty}q\left(q^2\Psi(q)\right)^{-s}
 \sum\limits_{t} {t}f\left(\frac{1}{{t^2\Psi({t})}}\right)\, = \, \infty.
\end {cases}\end{equation*}
\end{thm}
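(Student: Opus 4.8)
The plan is to treat the two cases by standard but rather different machinery: the convergence case by a direct covering estimate, and the divergence case by the Mass Transference Principle of Beresnevich--Velani. Throughout write $\rho_q:=1/(q^2\Psi(q))$, so that $\mathcal K(\Psi)=\limsup_{q}\bigcup_{p}B(p/q,\rho_q)$ (the inner union taken over those $p$ for which $B(p/q,\rho_q)$ meets $[0,1)$, so $O(q)$ terms), and note that the series appearing in the statement is $\sum_q q\,f(\rho_q)$. The first task is to observe this reformulation as a $\limsup$ set; everything else is organised around it.

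\textbf{Convergence.} Fix $Q$. Since $\mathcal K(\Psi)$ is a $\limsup$ set it is contained in $\bigcup_{q\ge Q}\bigcup_{p}B(p/q,\rho_q)$, a cover of $[0,1)$ by at most $\sum_{q\ge Q}(q+2)$ intervals of diameter $2\rho_q$, whose common scale tends to $0$ as $Q\to\infty$ because $\rho_q\to0$. Hence
\[
\mathcal H^f\big(\mathcal K(\Psi)\big)\le \limsup_{Q\to\infty}\sum_{q\ge Q}(q+2)\,f(2\rho_q).
\]
Applying the quasi-monotonicity hypothesis \eqref{ff2} with $x_1=\rho_q$ and $x_2=2\rho_q$ gives $f(2\rho_q)\le 2C\,f(\rho_q)$, so the right-hand side is $\lesssim\limsup_{Q\to\infty}\sum_{q\ge Q}q\,f(\rho_q)=0$ by the assumed convergence of $\sum_q q\,f(\rho_q)$. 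Therefore $\mathcal H^f(\mathcal K(\Psi))=0$.

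\textbf{Divergence.} Here I would invoke the Mass Transference Principle: for a sequence of balls $\{B_i\}$ in $\mathbb R$ with radii tending to $0$ and a dimension function $f$ with $f(r)/r$ monotonic and $f(r)/r\to\infty$ (our \eqref{ff2} and \eqref{f2infty}), one has $\mathcal H^f\big(\limsup_i B_i\big)=\infty$ on every interval \emph{provided} the dilated set $\limsup_i B_i^{f}$, where $B(x,r)^{f}:=B(x,f(r))$, has full Lebesgue measure. Applying this with $B_i$ ranging over the $B(p/q,\rho_q)$, it remains to show that $\limsup_q\bigcup_p B\big(p/q,\, f(\rho_q)\big)$ has full Lebesgue measure in $[0,1)$. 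But this dilated set is itself a classical approximation set, namely $\mathcal K(\Phi)$ with $\Phi(q):=1/(q^2 f(\rho_q))$, and by the Lebesgue theory of such sets (the divergence half of Khintchine's theorem, or in full generality the Duffin--Schaeffer theorem) it is of full measure precisely when $\sum_q q\,f(\rho_q)=\infty$ --- which is exactly the divergence hypothesis. This completes the plan.

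\textbf{Main obstacle.} The conceptual content is entirely supplied by the Mass Transference Principle; the real work lies in massaging the hypotheses on $f$ into the form it demands. The Principle is typically stated for $f$ with $f(r)/r$ \emph{genuinely} monotonic, whereas \eqref{ff2} only gives quasi-monotonicity, so one first replaces $f$ by a comparable genuinely monotonic dimension function --- harmless because this perturbs $\mathcal H^f$ and the series $\sum_q q\,f(\rho_q)$ only by bounded multiplicative factors. The second delicate point is the verification of the Lebesgue statement for $\mathcal K(\Phi)$: since $q\mapsto q\,f(\rho_q)$ need not be monotonic, one either appeals to the now-unconditional Duffin--Schaeffer theorem, or --- as in Beresnevich--Velani --- uses that $\rho_q$ is non-increasing (because $q^2\Psi(q)$ is non-decreasing) together with the quasi-monotonicity of $f(r)/r$ to reduce to a monotone approximating function before applying Khintchine's theorem. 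Precisely the two hypotheses \eqref{f2infty} and \eqref{ff2} are what these reductions need, which is why they appear in the statement.
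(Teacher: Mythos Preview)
The paper does not give its own proof of this statement: Theorem~\ref{Jarnik} is quoted in the introduction as a known result, attributed to \cite[Theorem~2]{BV06} with the relaxation from ``$x^{-1}f(x)$ decreasing'' to the quasi-monotonicity condition~\eqref{ff2} credited to \cite[Theorem~2.6]{HKWW}. So there is nothing in the paper to compare your argument against.

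That said, your proposal is correct and is precisely the argument of those references. The convergence half is the Hausdorff--Cantelli lemma (Proposition~\ref{bclem}) applied to the natural cover, and your use of~\eqref{ff2} to pass from $f(2\rho_q)$ to $f(\rho_q)$ is fine, if slightly more than is strictly needed. The divergence half via the Mass Transference Principle together with Khintchine's theorem is exactly the Beresnevich--Velani route; note that since $\Psi$ is non-decreasing the radii $\rho_q$ are non-increasing, so $q\mapsto f(\rho_q)$ is already non-increasing and Khintchine applies directly --- the appeal to Duffin--Schaeffer is unnecessary. The one genuine technical point you flag, namely that the MTP as originally stated wants $r^{-1}f(r)$ monotone rather than merely quasi-monotone, is exactly what \cite[Theorem~2.6]{HKWW} addresses; your proposed fix of replacing $f$ by a comparable gauge $g$ with $r^{-1}g(r)$ genuinely non-increasing (e.g.\ $g(x)=x\sup_{x\le y\ll 1}f(y)/y$, which~\eqref{ff2} forces to satisfy $f\le g\le Cf$) is the standard manoeuvre, though one must also take a little care that the resulting $g$ is itself increasing.
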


The condition \eqref{f2infty} is a natural condition used to exclude the case when $\H^f$ is comparable to the Lebesgue measure. In fact,  when $x^{-1}f(x)$ tends to a finite value as $x\to 0$ then $\H^f$ is comparable to the Lebesgue measure, see  \cite[Remark 3]{BV06}.  Note that the original Jarn\'ik theorem assumes the condition that ``$x^{-1}f(x)$ is decreasing'', the fact that it can be replaced with  the quasi-monotonicity condition given in \eqref{ff2} was noted in  \cite[Theorem 2.6]{HKWW}.  This condition is only required for the divergence case and the convergence case is free from any assumptions on the dimension  and the approximating functions.

% The condition is only required for the divergence case and the convergence case is free from any assumptions on the dimension  and the approximating functions
%
%We refer the reader to \cite[Remark 3]{BV06} for further details.

%
%\begin{rem}\label{rem1} Note that the difference in the result of Beresnevich-Velani \cite[Theorem 2]{BV06}  with the result \cite[Theorem 2.6]{HKWW} is the weakening of the monotonicity condition ``$x^{-1}f(x)$ to be decreasing'' to the quasi-monotonicity condition given in \eqref{ff2}. The condition is only required for the divergence case and the convergence case is free from any assumptions on the dimension  and the approximating functions.\end{rem}

%\commumtaz{Maybe we state a question here that "Is there any hope to remove the condition \eqref{ff2} from this theorem???}

Kleinbock-Wadleigh \cite{KlWad16} considered improvements to Dirichlet's theorem  (Theorem \ref{Dir}) by considering the following set
\begin{equation*}%\label{eq1}
D(\psi):=\left\{x\in\mathbb [0, 1):  \begin{aligned}&\exists\, N  \ {\rm such\  that\ the\ system}\ |qx-p|\, <\, \psi(t), \  %1\leq\,
 |q|<t\  \\%[2ex]
  &\text{has a nontrivial integer solution for all }t>N\quad
                           \end{aligned}
\right\}.
\end{equation*}
Here and throughout, $\psi:\mathbb R_+\to\mathbb R_+$ denote a non-increasing function. For reference, a real number $x$ will
be called
\emph{$\psi$-Dirichlet improvable} if $x\in D(\psi)$, and if $x\in D^c(\psi)$ then it will be referred to as \emph{$\psi$-Dirichlet non-improvable}.   The main result of \cite{KlWad16} was a Lebesgue measure dichotomy statement. To state their result and other results of the paper, we introduce an auxiliary function

\begin{equation}\label{twopsis1}{
\Psi(t):=\frac{t\psi(t)}{1-t\psi(t)} = \frac{1}{1-t\psi(t)} - 1.
}\end{equation}

\begin{thm}[{\cite[Theorem 1.8]{KlWad16}}]
%[Kleinbock--Wadleigh]
\label{KW} Let $\psi$ be non-increasing,  and suppose the function  $t\mapsto t\psi(t)$ be non-decreasing  and 
% function such that  $\Psi$ as in \eqref{twopsis1} (equivalently, the function $t\mapsto t\psi(t)$) is non-decreasing and 
 $t\psi(t)<1$  for all  large $t$.  Then, 
\begin{equation*}\label{hdsum} \mathcal L\left(D^c(\psi)\right)=\begin {cases}
 0 \ & {\rm if } \quad\sum\limits_{q=1}^\infty\frac{\log{\Psi}(q)}{q{\Psi}(q)}
\, < \, \infty;  \\[2ex]
1 \ & {\rm if } \quad
\sum\limits_{q=1}^\infty\frac{\log{\Psi}(q)}{q{\Psi}(q)} \, = \, \infty.
\end {cases}\end{equation*}
\end{thm}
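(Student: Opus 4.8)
The plan is to rewrite $D^c(\psi)$ in the language of continued fractions and then to run a Borel--Cantelli argument with respect to the Gauss measure $\mu$, which is comparable to Lebesgue measure on $[0,1)$. Write $x=[a_1,a_2,\dots]$ for the continued fraction expansion, $p_n/q_n$ for the convergents, $\|y\|$ for the distance from $y\in\R$ to the nearest integer, and $t_{n+2}:=[0;a_{n+2},a_{n+3},\dots]\in(0,1)$, so that $\|q_nx\|=(q_{n+1}+q_nt_{n+2})^{-1}$. Since convergent denominators are precisely the best-approximation denominators and $\psi$ is non-increasing, one checks that for $t\in(q_n,q_{n+1}]$ the system defining $D(\psi)$ has a nontrivial solution if and only if $\|q_nx\|<\psi(t)$, the worst such $t$ being $q_{n+1}$; hence $x\in D(\psi)$ iff $\|q_nx\|<\psi(q_{n+1})$ for all large $n$, and so $x\in D^c(\psi)$ iff $\|q_nx\|\ge\psi(q_{n+1})$ for infinitely many $n$. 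Substituting the identity for $\|q_nx\|$ and the relation \eqref{twopsis1} for $\Psi$ (here the hypothesis $t\psi(t)<1$ is used, to keep $\Psi>0$), we obtain, up to a Lebesgue null set,
\begin{equation*}
D^c(\psi)=\limsup_{n\to\infty}B_n,\qquad B_n:=\Bigl\{x:\ q_n(x)\,t_{n+2}(x)\le \tfrac{q_{n+1}(x)}{\Psi(q_{n+1}(x))}\Bigr\}.
\end{equation*}
Using $q_n/q_{n+1}\asymp 1/a_{n+1}$ and $t_{n+2}\asymp 1/a_{n+2}$, the set $B_n$ is squeezed between $\{a_{n+1}a_{n+2}\ge\Psi(q_{n+1})\}$ and $\{(a_{n+1}+1)(a_{n+2}+1)>\Psi(q_{n+1})\}$, so $D^c(\psi)$ is, up to the boundary, a $\limsup$ set governed by products of two consecutive partial quotients; the hypothesis that $t\mapsto t\psi(t)$ be non-decreasing enters here, making $\Psi$ non-decreasing.

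Next I would estimate $\mu(B_n)$. Conditioning on a rank-$(n+1)$ cylinder $I=I(a_1,\dots,a_{n+1})$ fixes $q_n$ and $q_{n+1}$, and bounded distortion of the Gauss map gives $\mu(B_n\cap I)\asymp \min\!\bigl(1,\tfrac{q_{n+1}}{q_n\Psi(q_{n+1})}\bigr)\mu(I)$. Summing over $a_{n+1}=k$, with $\mu(I(a_1,\dots,a_{n+1}))\asymp k^{-2}\mu(I(a_1,\dots,a_n))$ and $q_{n+1}\asymp kq_n$, yields
\begin{equation*}
\mu(B_n)\asymp \int_0^1 S\bigl(q_n(x)\bigr)\,d\mu(x),\qquad S(q):=\sum_{k\ge1}\frac1{k^2}\min\!\Bigl(1,\frac{k}{\Psi(kq)}\Bigr),
\end{equation*}
and one checks $\Psi(q)^{-1}\lesssim S(q)\lesssim \Psi(q)^{-1}\log\Psi(q)$, with $S(q)\asymp \Psi(q)^{-1}\log\Psi(q)$ as soon as $\Psi$ varies slowly (e.g.\ $\Psi(q\Psi(q))\asymp\Psi(q)$): the factor $\log\Psi(q)$ comes from the harmonic sum $\sum_{k\le\Psi(q)}1/k$. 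Finally, since the $q_n(x)$ are strictly increasing in $n$ and $\mu(\{x:q\in\{q_m(x)\}_m\})\asymp\varphi(q)/q^2$, a routine Abel-summation transfer gives $\sum_n\int_0^1 g(q_n)\,d\mu\asymp\sum_q g(q)/q$ for any non-increasing $g$; taking $g(q)=\Psi(q)^{-1}\log\Psi(q)$ reproduces exactly $\sum_q\tfrac{\log\Psi(q)}{q\Psi(q)}$.

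In the convergence case the upper bound for $S$ and the transfer give $\sum_n\mu(B_n)\lesssim\sum_q\tfrac{\log\Psi(q)}{q\Psi(q)}<\infty$, so $\mu(\limsup B_n)=0$ and $\mathcal L(D^c(\psi))=0$. In the divergence case one must instead show $\sum_n\mu(B_n)=\infty$; this amounts to extracting enough regularity of $\Psi$ from the divergence of $\sum_q\tfrac{\log\Psi(q)}{q\Psi(q)}$ to know that $S(q)$ is of order $\Psi(q)^{-1}\log\Psi(q)$ on the scales that matter (heuristically: were $\Psi(2q)\ge2\Psi(q)$ to hold for all large $q$ then $\Psi(t)\gtrsim t$, and, by monotonicity of $x\mapsto(\log x)/x$, the series would converge --- a contradiction). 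Granting $\sum_n\mu(B_n)=\infty$, one establishes the quasi-independence estimate $\mu(B_m\cap B_n)\lesssim\mu(B_m)\mu(B_n)$ for $|m-n|$ large --- via the cylinder structure, bounded distortion, and slow variation of $\Psi$ to absorb the shift in the argument $q_{n+1}$ --- and deduces $\mu(\limsup B_n)>0$ from the Chung--Erd\H{o}s / Kochen--Stone divergence Borel--Cantelli lemma; a zero--one law for $D(\psi)$, obtained by linking $\psi$-Dirichlet improvability to the dynamics of the Gauss map as in \cite{KlWad16}, then upgrades this to $\mathcal L(D^c(\psi))=1$.

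The main obstacle is the divergence half. The events $B_n$ are not cylinder events: they depend on the ``global'' quantities $q_n,q_{n+1}$, and $\psi$ is evaluated at the moving point $q_{n+1}$. Consequently the quasi-independence estimate and the zero--one law both require genuine care, and one has to verify that divergence of the series really does force $\Psi$ to be regular enough for the lower bound $\sum_n\mu(B_n)=\infty$. Essentially all of the work is concentrated there; the rest is bookkeeping with the Gauss map.
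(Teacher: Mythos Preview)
The paper does not contain a proof of this statement: Theorem~\ref{KW} is quoted as a result of Kleinbock--Wadleigh \cite{KlWad16} and serves purely as background for the paper's main contribution (Theorem~\ref{thm1} on Hausdorff measure). There is therefore nothing in the present paper to compare your proposal against.

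That said, your outline is broadly the strategy of the original \cite{KlWad16} proof: reduce $D^c(\psi)$ to a $\limsup$ set governed by $a_na_{n+1}>\Psi(q_n)$ via the best-approximation property of convergents (this is exactly the paper's Lemma~\ref{kwlem}), then run Borel--Cantelli for the Gauss measure, with the divergence half requiring a quasi-independence estimate and a zero--one upgrade. You have correctly located the genuine difficulty in the divergence case. A few of your intermediate claims are looser than you suggest, however. The identity $\mu(\{x:q=q_m(x)\text{ for some }m\})\asymp\varphi(q)/q^2$ and the ``routine Abel-summation transfer'' $\sum_n\int g(q_n)\,d\mu\asymp\sum_q g(q)/q$ are not quite routine; in \cite{KlWad16} this step is handled differently, working directly with cylinders rather than integrating over $q_n$. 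More importantly, your heuristic for extracting regularity of $\Psi$ from divergence (``were $\Psi(2q)\ge 2\Psi(q)$ for all large $q$\dots'') only rules out a global doubling failure, not the intermittent irregularity that actually obstructs the lower bound $S(q)\asymp\Psi(q)^{-1}\log\Psi(q)$; the original proof does not try to deduce pointwise regularity of $\Psi$ but instead proves the quasi-independence estimate in a form that tolerates the shift in the argument of $\Psi$. So while your plan is sound, the divergence half as written is still a sketch with the hard estimate unproven.
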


Our main result  is the following generalised $f$-Hausdorff measure dichotomy statement valid for a range dimension functions. 
%which is free from the condition \eqref{ffm10}.

\begin{thm}\label{thm1}  Let $\psi$ be a non-increasing positive function with $t\psi(t)<~1$ for all large $t$.  Let $f$ be a dimension function satisfying \eqref{f2infty} and \eqref{ff2}.
%such that 
%$x \mapsto x^{-1} f(x)$ is decreasing, and $x^{-1} f(x) \to~\infty$  as  $x \to 0.$ 
 Then
$$\H^f\big(D^c(\psi)\big)=\begin {cases}
 0 \ & {\rm if } \quad \sum\limits_{k=1}^\infty\  \sum\limits_{ j<\log_2\Psi(2^k)} 2^{2k+j}f\left(\frac{2^{-(2k+j)}}{\Psi(2^{k+j})}\right) \, < \, \infty; \\[2ex]
 \infty \ & {\rm if } \quad  \sum\limits_{k=1}^\infty \  \sum\limits_{ j<\log_2\Psi(2^k)} 2^{2k+j}f\left(\frac{2^{-(2k+j)}}{\Psi(2^{k+j})}\right)  \,= \, \infty.
\end {cases}$$

\end{thm}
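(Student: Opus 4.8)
The plan is to reduce the Hausdorff measure dichotomy for $D^c(\psi)$ to the corresponding statement for a classical $\limsup$ set of the type $\mathcal K(\Psi)$ covered by Jarn\'ik's theorem (Theorem \ref{Jarnik}), and then to analyse the resulting sum. The starting point is the known relationship, going back to Kleinbock--Wadleigh and exploited in \cite{HKWW}, between Dirichlet non-improvability and the behaviour of continued fraction partial quotients: $x \in D^c(\psi)$ precisely when the product $q_{n} q_{n+1}$ of consecutive continued fraction denominators is large infinitely often, in a manner quantified by $\Psi$. More precisely one shows that, up to a set of the right measure, $x\in D^c(\psi)$ if and only if $a_{n+1}(x) \ge \Psi(q_n)/q_n \cdot q_n \asymp \Psi(q_n)$ for infinitely many $n$ (with the precise inequality tracked carefully), which realises $D^c(\psi)$ as a $\limsup$ set built from intervals of length comparable to $1/(q_n^2 \, a_{n+1}) \asymp 1/(q_n^2 \Psi(q_n))$ around rationals $p_n/q_n$. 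First I would make this equivalence precise, including the two-sided control needed so that no measure is lost, and identify the natural cover of $D^c(\psi)$ by such intervals.

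Next I would set up the Hausdorff measure computation. For the convergence case, the cover just described gives directly that $\H^f(D^c(\psi)) = 0$ provided $\sum_{n} \#\{p : p/q \text{ admissible}\} \, f(1/(q^2\Psi(q)))$ converges when summed appropriately over $q$; counting the number of valid numerators $p$ for each denominator $q$ in a dyadic block $q \asymp 2^k$ and then resumming over the possible sizes of $a_{n+1}$ (parametrised by $j$ with $2^j \asymp a_{n+1}$, subject to $a_{n+1} < \Psi(q_n)$, i.e. $j < \log_2 \Psi(2^k)$) produces exactly the double sum $\sum_k \sum_{j < \log_2 \Psi(2^k)} 2^{2k+j} f\!\left(2^{-(2k+j)}/\Psi(2^{k+j})\right)$ appearing in the theorem. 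The shift from $\Psi(2^k)$ to $\Psi(2^{k+j})$ in the argument of $f$ comes from the fact that $q_{n+1} \asymp q_n a_{n+1} \asymp 2^{k+j}$, and the interval around $p_{n+1}/q_{n+1}$ has length $\asymp 1/(q_{n+1}^2) \cdot (1/a_{n+2})$; keeping track of which denominator is the relevant one is a bookkeeping point that must be handled with care but is not conceptually deep.

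For the divergence case I would invoke Jarn\'ik's theorem together with a mass-transference or ubiquity argument. The strategy is to show that divergence of the double sum forces $D^c(\psi)$ to contain (up to $\H^f$-measure considerations) a $\limsup$ set of the form $\mathcal K(\Phi)$ for a suitable $\Phi$ with $\sum_t t f(1/(t^2\Phi(t))) = \infty$, so that $\H^f(\mathcal K(\Phi)) = \infty$ by Theorem \ref{Jarnik} and hence $\H^f(D^c(\psi)) = \infty$. The local ubiquity of the rationals $p_n/q_n$ with the prescribed growth of $a_{n+1}$ is what makes this work; the conditions \eqref{f2infty} and \eqref{ff2} on $f$ are exactly what the divergence half of Jarn\'ik's theorem needs, which is why they are imposed here. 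One technical wrinkle is that the ``one-sided'' nature of Dirichlet improvability (the system must have a solution for \emph{all} large $t$, not infinitely many) means $D^c(\psi)$ is genuinely a $\limsup$ set in $n$, so standard $\limsup$ technology applies, but the translation between the $t$-indexed condition defining $D(\psi)$ and the $n$-indexed condition on partial quotients must be done cleanly.

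The main obstacle I anticipate is precisely this translation combined with the resummation: converting the continued fraction characterisation into the stated double sum over $(k,j)$ requires carefully matching the sizes of $q_n$, $q_{n+1} \asymp q_n a_{n+1}$, and the lengths of the associated cylinder/approximation intervals, and then checking that the error terms (the multiplicative constants hidden in $\asymp$, and the quasi-monotonicity slack allowed by \eqref{ff2}) do not affect convergence or divergence of the sum. Verifying that the convergence and divergence conditions extracted from the cover and from the ubiquity construction are the \emph{same} double sum — rather than two sums that merely look similar — is where the real work lies. Once the characterisation and the counting are in place, the zero half is a routine Borel--Cantelli estimate and the infinity half is a direct appeal to Theorem \ref{Jarnik}.
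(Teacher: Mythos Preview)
Your proposal has two genuine gaps, one of which is fatal for the divergence half.

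First, the continued-fraction characterisation you state is not the right one. You write that, up to negligible sets, $x\in D^c(\psi)$ iff $a_{n+1}(x)\ge \Psi(q_n)$ for infinitely many $n$; this describes the set $G_1(\Psi)$, which is strictly smaller than $D^c(\psi)$. The correct sandwich (Lemma~\ref{kwlem}) is
\[
G_1(\Psi)\subset G(\Psi)\subset D^c(\psi)\subset G(\Psi/4),
\]
where $G(\Psi)=\{x:a_n(x)a_{n+1}(x)>\Psi(q_n)\text{ i.m.}\}$ involves the \emph{product} of two consecutive partial quotients. It is precisely this product structure that generates the inner sum over $j$: in the paper $2^j\asymp a_n$ (not $a_{n+1}$), and the covering of $G(\Psi)$ requires splitting according to whether $a_n\le\Psi(q_{n-1})$ or $a_n>\Psi(q_{n-1})$. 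Your covering sketch, based on the single quotient $a_{n+1}$, would reproduce the Jarn\'ik sum rather than the double sum.

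Second, and more seriously, the divergence case cannot be obtained by appeal to Theorem~\ref{Jarnik}. Any inclusion of the form $\mathcal K(\Phi)\subset D^c(\psi)$ forces $\Phi\gtrsim\Psi$, and then
\[
\sum_q qf\Big(\frac{1}{q^2\Phi(q)}\Big)\ \lesssim\ \sum_q qf\Big(\frac{1}{q^2\Psi(q)}\Big),
\]
which is the \emph{single} Jarn\'ik sum. This sum is dominated by the double sum in the statement (it corresponds to the diagonal $j=0$), so divergence of the double sum does \emph{not} imply divergence of the Jarn\'ik sum. Concretely, for $f(x)=x\log(1/x)$ and $\Psi(q)=(\log q)^2$, the double sum picks up an extra $\log\Psi(q)$ factor (cf.\ Corollary~\ref{cor1}) and diverges, while $\sum_q qf(1/(q^2\Psi(q)))\asymp\sum_q (\log q)^{-1}q^{-1}$ converges. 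Thus no choice of $\Phi$ lets Jarn\'ik's theorem carry the divergence half at the generality claimed. The whole point of Theorem~\ref{thm1} is to go beyond the ESL regime where the two sums are equivalent.

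What the paper actually does for divergence is bypass Jarn\'ik entirely: it invokes the Hussain--Simmons criterion (Theorem~\ref{mainthm}), building by hand a probability measure supported on $\bigcup_{\omega,a}S_{\omega a}$ with $S_{\omega a}=u_{\omega a}([0,a/\Psi(Q_\omega a)])$, and verifying the Frostman-type bound~\eqref{muBbound} directly. The measure is constructed as a weighted sum of normalised Lebesgue measures on the $S_{\omega a}$, with weights $f(\rho_{k,\ell})$ indexed by the dyadic scales of $Q_\omega$ and $a$. This mass-distribution argument is where the work lies, and it is not reducible to an application of Theorem~\ref{Jarnik}.
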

The condition \eqref{f2infty}  comes into play for establishing the divergence case, in particular, our proof relies on a criterion from \cite{HussainSimmons2} which assumes this condition. As stated above, \eqref{f2infty} is a natural condition to exclude the Lebesgue measure case. 

%we   prove this theorem, we use a delicate covering argument for the convergence case and then use the Hausdorff-Cantelli Lemma. The divergence case, however, relies on a criterion from \cite{HussainSimmons2} which assumes the condition \eqref{ff2}. 
 % is used to establish the divergence case only, this is due  therein to use the criterion. 
% establish the divergence case.

\subsection{Some consequences of Theorem \ref{thm1}}
In \cite{HKWW}, the $\H^f$ measure for  the set $D^c(\psi)$ was proved for the  dimension functions satisfying the condition
\begin{equation}\label{ffm10}
\text{there exists }B>1\text{ such that } \limsup_{x\to 0}\frac{f(Bx)}{f(x)}<B.
\end{equation}
 They coined the term {\sl essentially sub-linear} (ESL) for such dimension functions. As noted in \cite[Lemma 3.1]{HKWW}, an ESL  dimension function satisfies \eqref{f2infty} and \eqref{ff2}.  This condition is clearly  satisfied for  $f(x)=x^s$ when $0\le s<1$.

\begin{cor}[{\cite[Theorem 1.6]{HKWW}}] \label{HKWWthm} Let $\psi$ be a non-increasing positive function with $t\psi(t)<~1$ for all large $t$. Let $f$ be a dimension function such that \eqref{ffm10} is satisfied. 
 Then
$$ \H^f\big(D^c(\psi)\big)=\begin {cases}
 0 \ & {\rm if } \quad \sum\limits_{q} {q}f\left(\frac{1}{{q^2\Psi({q})}} \right) \, < \, \infty; \\[2ex]
 \infty \ & {\rm if } \quad \sum\limits_{q} {q}f\left(\frac{1}{{q^2\Psi({q})}} \right)  \, = \, \infty.
\end {cases}$$
\end{cor}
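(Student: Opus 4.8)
The plan is to deduce Corollary~\ref{HKWWthm} from Theorem~\ref{thm1}. The first, purely formal, step is that an ESL dimension function automatically satisfies \eqref{f2infty} and \eqref{ff2} --- this is \cite[Lemma~3.1]{HKWW} --- so Theorem~\ref{thm1} applies. It then remains to show that, for ESL $f$, the double sum appearing in Theorem~\ref{thm1} and the classical series $\sum_{q} q\,f\big(1/(q^2\Psi(q))\big)$ are simultaneously finite or infinite; I would in fact prove that the two are comparable up to multiplicative constants.

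The engine of the comparison is a summation estimate for ESL functions. From \eqref{ffm10} fix $B>1$ and $\mu<B$ with $f(Bx)\le\mu f(x)$ for all small $x>0$; iterating and interpolating between consecutive powers of $B$ using the monotonicity of $f$ gives $f(2^i x)\le C\rho^{\,i}f(x)$ for all $i\ge 0$ and all small $x$, with $\rho:=\mu^{1/\log_2 B}<2$ and $C>0$ an absolute constant. Summing the geometric series therefore yields
\be
\sum_{i\ge 0}2^{-i}f(2^i x)\ \asymp\ f(x)\qquad\text{for all small }x>0,
\ee
the lower bound being the $i=0$ term. In words: for an ESL function a dyadically spaced family of scales contributes, up to an absolute constant, only through its single dominant term --- a property which fails for a general dimension function, and which is exactly what allows one to collapse the extra layer of scales present in the sum of Theorem~\ref{thm1}.

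With this estimate in hand I would reorganise the double sum. For each fixed $k$ the terms indexed by $j$ form a dyadically spaced family, and the estimate above --- together with \eqref{ff2}, used to absorb the bounded distortion caused by replacing $\Psi$ at one dyadic scale by its value at a neighbouring one --- shows that this family contributes comparably to its dominant term, which one checks is $\asymp 2^{2k}f\big(2^{-2k}/\Psi(2^k)\big)$ uniformly in $k$. Hence the double sum is $\asymp\sum_{k\ge1}2^{2k}f\big(2^{-2k}/\Psi(2^k)\big)$. The last step is to recognise the right-hand side as the Cauchy-condensed version of $\sum_q q\,f\big(1/(q^2\Psi(q))\big)$: on a dyadic block $q\in[2^k,2^{k+1})$ there are $\asymp 2^k$ integers, each of size $\asymp 2^k$, while the summand has controlled behaviour across the block because $\psi$ is non-increasing; this, with \eqref{ff2} and \eqref{f2infty}, identifies the block sum with $2^{2k}f\big(2^{-2k}/\Psi(2^k)\big)$ up to constants. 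Combining the comparisons gives the stated dichotomy.

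The step I expect to be the main obstacle is the reorganisation of the double sum: correctly delimiting the admissible range of $j$, verifying that the constants coming from the summation estimate and from passing between $\Psi$ at neighbouring dyadic scales are genuinely uniform in $k$, and treating the range where $\Psi$ is large (there the summand is small, so crude bounds suffice, but this has to be made precise). The ESL hypothesis is exactly strong enough to make all of these comparisons hold with absolute constants; granted the summation estimate, the remaining work, though a little intricate, is routine.
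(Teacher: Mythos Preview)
Your overall plan is sound and your summation estimate $\sum_{i\ge 0}2^{-i}f(2^i x)\asymp f(x)$ for ESL $f$ is exactly the right tool. The gap is in how you apply it. For fixed $k$ the inner sum has the shape $\sum_{j}2^{j}f(2^{-j}\cdot\text{stuff})$, \emph{not} $\sum_{i}2^{-i}f(2^{i}x)$: the prefactor is $2^{j}$ while the argument of $f$ shrinks. For ESL $f$ this goes the wrong way --- from $f(2^{i}x)\le C\rho^{i}f(x)$ one gets $2^{j}f(2^{-j}x)\ge C^{-1}(2/\rho)^{j}f(x)$, so the terms \emph{grow} in $j$ and the $j=0$ term is not dominant. (Try $f(x)=x^{s}$, $\Psi(q)=q^{\tau}$ with $s<1/(1+\tau)$.) Consequently your claim that the inner $j$-sum is $\asymp 2^{2k}f(2^{-2k}/\Psi(2^{k}))$ fails, although your final comparison happens to be correct.

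The fix is to swap the order: set $m=k+j$ and sum over $k$ for fixed $m$. Now the argument of $f$ is $2^{-(k+m)}/\Psi(2^{m})$ with $\Psi(2^{m})$ fixed, and writing $i=m-k$ puts the sum exactly in the form $2^{2m}\sum_{i\ge 0}2^{-i}f(2^{i}x)$ with $x=2^{-2m}/\Psi(2^{m})$, so your estimate applies cleanly and gives $\asymp 2^{2m}f(2^{-2m}/\Psi(2^{m}))$; Cauchy condensation then finishes as you describe. For comparison, the paper does not manipulate the dyadic double sum directly at all: it reverts to the covering series $\sum_{q}\sum_{q/\Psi(q)<p\le q}f\big(1/(pq\Psi(q))\big)$ underlying Theorem~\ref{thm1} and invokes the equivalence \eqref{equiv} from \cite{HKWW}, which is the ESL input there. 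Your route, once repaired, is more self-contained since it avoids quoting \eqref{equiv} and works purely at the level of the series in Theorem~\ref{thm1}.
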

\begin{rem}
In the proof of \cite[Theorem 1.6]{HKWW}, the ESL condition \eqref{ffm10} comes into play for establishing the equivalence of the series
\begin{equation}\label{equiv}
\sum_{q=1}^{\infty}qf\Big(\frac{1}{q^2\Psi(q)}\Big)\asymp\sum_{q=1}^{\infty}\sum_{1\le p\le q}f\Big(\frac{1}{pq\Psi(q)}\Big).
\end{equation}
The series on the right hand side arises as a covering argument for the set $D^c(\psi)$ and the series on the left hand side appears in Jarn\'ik's theorem \ref{Jarnik} stated above. \end{rem}

Note that, Corollary \eqref{HKWWthm} is not valid for dimension functions that do not satisfy the condition \eqref{ffm10} such as $f(x) = x$ or $f(x)= x \log (1/x)$.  However, even if Corollary \ref{HKWWthm} holds for $f(x)=x$,  the sum condition $\sum\frac1{q\Psi(q)}$, is weaker than the sum condition of Theorem \ref{KW},  $\sum_{q}\frac{\log{\Psi}(q)}{q{\Psi}(q)}$, by a `$\log$' factor.  

The  following corollary shows that Theorem \ref{thm1} is also valid for some {\sl non-essentially sub-linear } (NESL) dimension functions. A dimension function $f$ is NESL if it satisfies
\begin{equation}\label{NESL}
\text{for every }B>1, \limsup_{x\to 0}\frac{f(Bx)}{f(x)}\geq B.
\end{equation}
One example for NESL dimension function is
\begin{equation}\label{fnesl}f(x)=x(\log(1/x))^{a_1}(\log\log(1/x))^{a_2}\cdots (\log\log\cdots\log(1/x))^{a_n}.
\end{equation} where   $a_i\in\R_{\geq0}$ for $1\leq i\leq n$, and  there is at least one $1\leq i\leq n$ such that $a_i\neq 0$.  

\begin{cor}\label{cor1} Let $\psi$ be a non-increasing positive function with $t\psi(t)<~1$ for all large $t$. Let $f$ be given in \eqref{fnesl}, then
$$\H^f\big(D^c(\psi)\big)=\begin {cases}
 0 \ & {\rm if } \quad \sum\limits_{q} q\log\left(\Psi(q)\right)f\left(\frac{1}{{q^2\Psi({q})}} \right) \, < \, \infty; \\[2ex]
 \infty \ & {\rm if } \quad \sum\limits_{q}q \log\left(\Psi(q)\right)f\left(\frac{1}{{q^2\Psi({q})}} \right)  \, = \, \infty.
\end {cases}$$
\end{cor}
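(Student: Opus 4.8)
The plan is to deduce Corollary~\ref{cor1} directly from Theorem~\ref{thm1}. Two things are needed: that every $f$ of the shape \eqref{fnesl} is an admissible dimension function for Theorem~\ref{thm1} (i.e. satisfies \eqref{f2infty} and \eqref{ff2}), and that for such $f$ the double series in Theorem~\ref{thm1} converges precisely when the single series $\sum_q q\log\Psi(q)\,f\big(1/(q^2\Psi(q))\big)$ does.

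First I would check admissibility. Write $f(x)=x\,g(x)$ with $g(x)=\big(\log\tfrac1x\big)^{a_1}\big(\log\log\tfrac1x\big)^{a_2}\cdots$. Since every $a_i\ge 0$ and at least one is positive, each iterated-logarithm factor is $\ge 1$ near $0$ and tends to infinity, so $f(x)/x=g(x)\to\infty$ as $x\to0$; this is \eqref{f2infty}. Each factor is non-increasing in $x$ on a right neighbourhood of $0$, hence so is the product $g$, so $x\mapsto f(x)/x$ is non-increasing there and \eqref{ff2} holds with $C=1$; and $f$ is plainly continuous, increasing, with $f(0^+)=0$. Thus Theorem~\ref{thm1} applies and $\H^f(D^c(\psi))$ is $0$ or $\infty$ according as its double sum converges or diverges.

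The heart of the matter is the passage from that double sum to $\sum_q q\log\Psi(q)\,f\big(1/(q^2\Psi(q))\big)$. Feeding $f(x)=xg(x)$ into the double sum cancels the factor $2^{2k+j}$, so the general term becomes $g\big(2^{-(2k+j)}/\Psi(2^{k+j})\big)\big/\Psi(2^{k+j})$, with the inner sum running over $\asymp\log_2\Psi(2^k)$ values of $j$. I would substitute $m=k+j$, interchange the order of summation, and estimate the resulting inner sum over $k$ using the slow variation of $g$: by an elementary Potter-type bound, $g$ stays within a fixed constant factor over any range of arguments whose reciprocals have comparable logarithms — here the relevant quantity is $\log\big(2^{k+m}\Psi(2^m)\big)$ — and, combined with the monotonicity of $\Psi$, this traps the inner sum between constant multiples of (number of summands) times one representative term. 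Matching the outcome against the elementary dyadic estimate $\sum_q q\log\Psi(q)f\big(1/(q^2\Psi(q))\big)\asymp\sum_k 2^{2k}\log\Psi(2^k)f\big(2^{-2k}/\Psi(2^k)\big)$ (valid because that summand is essentially monotone in $q$) then delivers the equivalence. Morally this is, for the family \eqref{fnesl}, the analogue of the equivalence \eqref{equiv} behind Corollary~\ref{HKWWthm}: because $g$ is slowly varying rather than decaying like a positive power of $x$ — the precise sense in which \eqref{fnesl} is non-essentially sub-linear — the $j$-sum does not concentrate on $O(1)$ terms but spreads with comparable weight over all $\asymp\log_2\Psi(2^k)$ dyadic scales, and that is what produces the extra factor $\log\Psi(q)$.

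The main obstacle is precisely this series reduction, and within it the inner sum when $\Psi$ grows quickly: $\Psi$ carries no regularity assumption (no doubling, no prescribed growth), so $\Psi(2^{k+j})$ may be vastly larger than $\Psi(2^k)$ for $j$ close to $\log_2\Psi(2^k)$, and one cannot simply replace one by the other. I would handle this by a dichotomy. When $\Psi$ grows slowly enough that $\log\Psi(2^{k+j})\asymp\log\Psi(2^k)$ throughout the relevant range of $j$, the slow-variation argument above gives a genuine two-sided comparison of the two series. In the complementary regime the factors $1/\Psi$ decay fast enough that both series converge regardless, so the ``if and only if'' is preserved vacuously. Setting up this split cleanly and checking that the implied constants are uniform in $k$ is the real technical work; the admissibility check and the dyadic comparisons are routine.
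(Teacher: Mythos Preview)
Your approach is the paper's own. The paper also deduces the corollary from Theorem~\ref{thm1} by establishing the series equivalence~\eqref{claim1}, and its mechanism is precisely your slow variation of $g(x)=f(x)/x$, written instead as the two-sided bound $A\,f(x/A)\asymp A^2 f(x/A^2)$: the upper inequality is just~\eqref{ff2}, and the lower is obtained from the explicit form~\eqref{fnesl} via the elementary observation that $\log(A/x)\le 2\log(1/x)$ once $Ax\le 1$. After this the paper substitutes $R=QA$ (your $m=k+j$) to collapse the double sum. The only real difference is that the paper carries no dichotomy on the growth of $\Psi$: after substituting it records the inner constraint as $A<\Psi(R)$ rather than the genuine $A<\Psi(R/A)$ and simply counts $\asymp\log_2\Psi(R)$ admissible values of $A$ per $R$, so the obstacle you isolate for fast-growing $\Psi$ is passed over there --- your proposed split is additional care beyond what the paper records, not a departure from its line of argument.
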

%As a special case, for $f(x)=~x\log(1/x)^{a_1}$, we have
%
%$$\sum\limits_{q}q \log\left(\Psi(q)\right)f\left(\frac{1}{{q^2\Psi({q})}} \right)\asymp \sum\limits_{q} \frac{(\log(q))^{a_1}\log\left(\Psi(q)\right)}{q\Psi(q)}.
%$$

The next corollary shows that the conditions \eqref{ffm10} or \eqref{NESL} in Corollaries \ref{HKWWthm} and \ref{cor1} respectively can be dropped at a cost of a restriction on the approximating function $\Psi$. 
\begin{cor}\label{cor2} Let $\psi$ be a non-increasing positive function with $t\psi(t)<~1$ for all large $t$. Let $f$ be a dimension function satisfying \eqref{f2infty} and \eqref{ff2}. Let $\Psi$ be as in \eqref{twopsis1} such that, for all $ x>0$ and $Q>1$, the following condition holds
\begin{equation}\label{condpsi}
\Psi(Q^x)\asymp\Psi(Q),
\end{equation}
where the implied constant depends only on $x$.
 Then
$$\H^f\big(D^c(\psi)\big)=\begin {cases}
 0 \ & {\rm if } \quad \sum\limits_{q} q\log\left(\Psi(q)\right)f\left(\frac{1}{{q^2\Psi({q})}} \right) \, < \, \infty; \\[2ex]
 \infty \ & {\rm if } \quad \sum\limits_{q}q \log\left(\Psi(q)\right)f\left(\frac{1}{{q^2\Psi({q})}} \right)  \, = \, \infty.
\end {cases}$$
\end{cor}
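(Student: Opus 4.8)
The plan is to read off the corollary from Theorem \ref{thm1}: under \eqref{condpsi} I must show that the double series in that theorem converges (resp.\ diverges) exactly when the single series $\mathcal S:=\sum_q q\log\Psi(q)\,f\!\big(1/(q^2\Psi(q))\big)$ does; in fact I expect the two to be comparable up to constants. Write $M_k:=2^{2k}f\big(2^{-2k}/\Psi(2^k)\big)$ and $J_k:=\lfloor\log_2\Psi(2^k)\rfloor$. The key preliminary fact is that \eqref{condpsi} forces at most polylogarithmic growth of $\Psi$: iterating the case $x=2$ gives $\Psi(Q^{2^n})\le C^n\Psi(Q)$, hence $\Psi(Q)\lesssim(\log Q)^{\gamma}$ with $\gamma=\log_2 C$. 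Consequently $J_k=O(\log k)=o(k)$, so in the inner sum the index $j$ runs over only $\asymp\log_2\Psi(2^k)$ values, $2^{k+j}=2^k\cdot2^j$ lies between $2^k$ and $(2^k)^2$, and applying \eqref{condpsi} once more (with exponent $\le 2$, using monotonicity of $\Psi$) yields $\Psi(2^{k+j})\asymp\Psi(2^k)$ \emph{uniformly} in $j$. Since \eqref{ff2} together with monotonicity of $f$ also gives the doubling relation $f(\lambda x)\asymp_\lambda f(x)$, a routine dyadic decomposition of $\mathcal S$ using \eqref{condpsi} gives $\mathcal S\asymp\sum_k J_kM_k$. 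Thus the problem reduces to proving
$$\sum_k\sum_{j<\log_2\Psi(2^k)}2^{2k+j}f\!\Big(\tfrac{2^{-(2k+j)}}{\Psi(2^{k+j})}\Big)\ \asymp\ \sum_k J_kM_k .$$

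For the lower bound I proceed term by term. Fix $k$ and $0\le j<\log_2\Psi(2^k)$; since $j\ge 0$ and $\Psi$ is non-decreasing, the argument $u:=2^{-(2k+j)}/\Psi(2^{k+j})$ of $f$ is at most $2^{-2k}/\Psi(2^k)$, so the quasi-monotonicity of $x\mapsto f(x)/x$ in \eqref{ff2} gives
$$2^{2k+j}f\!\Big(\tfrac{2^{-(2k+j)}}{\Psi(2^{k+j})}\Big)=\frac{1}{\Psi(2^{k+j})}\cdot\frac{f(u)}{u}\ \gtrsim\ \frac{\Psi(2^k)}{\Psi(2^{k+j})}\,M_k\ \gtrsim\ M_k ,$$
the last step by the uniform comparison above. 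Summing the $\asymp J_k$ inner terms bounds the inner sum below by $\gtrsim J_kM_k$, hence the left-hand side is $\gtrsim\sum_k J_kM_k$.

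The upper bound is the heart of the matter and the step I expect to be the main obstacle, because a single inner term can be as large as $\Psi(2^k)M_k$, so there is no term-by-term comparison with $\sum_k J_kM_k$. The remedy is to reindex the inner sum by $m:=k+j$: the general term then becomes \emph{exactly} $2^{k+m}f\big(2^{-(k+m)}/\Psi(2^m)\big)$ — now carrying $\Psi(2^m)$ rather than $\Psi(2^{k+j})$ — subject to $k\le m\le k+J_k$. Since $m\ge k$ we have $2^{-(k+m)}/\Psi(2^m)\ge 2^{-2m}/\Psi(2^m)$, and \eqref{ff2} (with monotonicity of $f$) gives $f\big(2^{-(k+m)}/\Psi(2^m)\big)\le C\,2^{m-k}f\big(2^{-2m}/\Psi(2^m)\big)$, whence that term is $\le C\,M_m$ — an estimate that uses neither \eqref{condpsi} nor uniformity. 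Now exchange the order of summation: each $M_m$ is counted $\#\{k:\,m-J_k\le k\le m\}\le J_m+1$ times, because $J_k$ is non-decreasing in $k$. Hence the left-hand side is $\le C\sum_m(J_m+1)M_m\asymp\sum_m J_mM_m$, which together with the lower bound establishes the displayed equivalence and, via Theorem \ref{thm1}, the corollary.

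Finally, the only point needing separate comment is the degenerate possibility that $\Psi$ does not tend to infinity: then $\log\Psi$ is bounded, the target series is comparable to $\sum_k M_k$, the inner sum in Theorem \ref{thm1} collapses to its $j=0$ term, and the equivalence is immediate; so one may assume $\Psi(t)\to\infty$ throughout the argument above.
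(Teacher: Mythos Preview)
Your argument is correct and, like the paper, reduces Corollary~\ref{cor2} to Theorem~\ref{thm1} by showing that the double series there is comparable to $\sum_k J_kM_k$ (which you then identify with $\mathcal S$ via a standard dyadic decomposition). Where you differ is in the mechanism of this comparison. The paper makes the ``half-step'' substitution $R=QA^{1/2}$ (equivalently $m=k+\tfrac{j}{2}$): since $Q\le QA^{1/2}\le QA\le Q^2$ and $\Psi(Q^2)\asymp\Psi(Q)$ by \eqref{condpsi}, one may replace $\Psi(QA)$ by $\Psi(R)$ uniformly, and then $Q^2A=R^2$ collapses the double sum to $\sum_R R^2\log\Psi(R)\,f(1/(R^2\Psi(R)))$ in one stroke. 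You instead reindex by $m=k+j$, use \eqref{ff2} to absorb the leftover factor $2^{m-k}$ into $M_m$, and then count via monotonicity of $J_k$; your lower bound is a separate term-by-term estimate using \eqref{condpsi}. The paper's route is shorter and more symmetric (one substitution handles both inequalities), while yours is more granular and has the pleasant by-product that the upper bound uses only \eqref{ff2} and the monotonicity of $\Psi$, with \eqref{condpsi} entering only through the lower bound and the dyadic identification $\mathcal S\asymp\sum_kJ_kM_k$.
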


Note that the condition \eqref{condpsi} is trivially satisfied for approximating function such as $\Psi(Q)=(\log(Q))^s$ for any $s\geq 0$. %The following corollary retrieves the main theorem of \cite{HKWW}.

%\begin{cor}[\cite{HKWW}, Theorem 1.6]\label{cor3} Let $\psi$ be a non-increasing positive function with $t\psi(t)<~1$ for all large $t$. Let $f$ be a dimension function such that \eqref{ffm10} is satisfied. 
% Then
%$$\H^f\big(D^c(\psi)\big)=\begin {cases}
% 0 \ & {\rm if } \quad \sum\limits_{q} qf\left(\frac{1}{{q^2\Psi({q})}} \right) \, < \, \infty; \\[2ex]
% \infty \ & {\rm if } \quad \sum\limits_{q}q f\left(\frac{1}{{q^2\Psi({q})}} \right)  \, = \, \infty.
%\end {cases}$$
%\end{cor}

%  \begin{cor}\label{cor4}
%Let $\psi$ be a non-increasing positive function with $t\psi(t)<~1$ for all large $t$. Let $a\geq 0$, and $f(x)=~x\log(1/x)^a$. 
% Then
%$$\H^f\big(D^c(\psi)\big)=\begin {cases}
% 0 \ & {\rm if } \quad \sum\limits_{q} \frac{(\log(q)^a)\log\left(\Psi(q)\right)}{q\Psi(q)} \, < \, \infty; \\[2ex]
% \infty \ & {\rm if } \quad \sum\limits_{q} \frac{(\log(q))^a\log\left(\Psi(q)\right)}{q\Psi(q)}   \, = \, \infty.
%\end {cases}$$
%
%
%\end{cor}
From the corollaries it should be clear that Theorem \ref{thm1} is valid for a range of dimension functions. However, there are certain dimension functions such as $f(x)=xe^{(\log\frac1x)^\beta}$ for $\beta<1$, for which we are unable to extract conclusive information (from Theorem \ref{thm1}) for the Hausdorff measure of the set $D^c(\psi)$.
% We believe some new ideas may be needed to this theorem does not provide much information.   The ideas presented in this paper may be useful to give a convergence/divergence sum conditions valid for all dimension functions.it is not exhaustive. For instance, the Hausdorff measure of $D^c(\psi)$ for the dimension functions of the type $f(x)=xe^{(\log(\frac1x)^\beta}$ for $\beta<1$ is an open question.

We refer the reader to \cite{KimKim, KW2} for the metrical theory of the higher dimensional affine form version of the set $D^c(\psi)$.

\noindent{\bf Acknowledgements.} The second and third named authors are supported by the Australian Research Council Discovery Project (ARC DP200100994). The third named author is a Royal Society University Research Fellow. We thank Professor Dmitry Kleinbock  and Professor Baowei Wang for many useful discussions. Finally, we thank an anonymous referee for a careful reading of the paper, pointing out some pertinent questions that lead us to correct some of the proofs.  % and pointing out some 

\section{Continued fractions and Dirichlet improvability}
The starting point for the work of Davenport \& Schmidt \cite{DaSc70} and Kleinbock \& Wadleigh \cite{KlWad16} is an observation that Dirichlet improvability is equivalent to a condition on the growth rate of partial quotients. To recall this connection, we start off with some of the basic properties of continued fractions.

  Define the Gauss transformation $T:[0,1)\to [0,1)$  by
\[T(0):=0, \quad T(x):=\frac1x\ {\rm(mod}\ 1),\quad {\rm for} \ x\in (0, 1).\]
 Every $x\in [0,1)$ has a  {\sl continued fraction expansion}, $$x
=\frac{1}{a_1(x)+\displaystyle{\frac{1}{a_2(x)+\displaystyle{\frac{1}{a_3(x)+\ddots}}}}}:=[a_1(x),a_2(x),\dots]$$ where $a_1,a_2,\dots$ are positive integers called the {\sl partial quotients} of $x$   and  $a_n(x)=~\lfloor 1/T^{n-1}(x)\rfloor$ for each $n\ge 1.$ We also write $p_n/q_n = [a_1,...,a_n]$ ($p_n, q_n$ coprime) for the $n$'th  {\sl convergent} of $x$.  With the conventions
$p_{-1}=1, ~q_{-1}=0, ~p_0=0$ and  $~q_0=1$,  these sequences can be generated by the following recursive relations, see \cite{Khi_64} for further details,
\begin {equation}\label{recu}
p_{n+1}=a_{n+1}(x)p_n+p_{n-1}, \ \
q_{n+1}=a_{n+1}(x)q_n+q_{n-1},\ \  n\geq 0.
\end {equation}

Thus $p_n=p_n(x), q_n=q_n(x)$ are determined by the partial quotients $a_1,\dots,a_n$, so we may write $p_n=p_n(a_1,\dots, a_n), q_n=q_n(a_1,\dots,a_n)$. When it is clear which partial quotients are involved, we denote them by $p_n, q_n$ for simplicity.

%For any integer vector $(a_1,\dots,a_n)\in \N^n$ with $n\geq 1$, write
%\begin{equation}\label{cyl}
%I_n(a_1,\dots,a_n):=\left\{x\in [0, 1): a_1(x)=a_1, \dots, a_n(x)=a_n\right\}
%\end{equation}
%for the corresponding `cylinder of order $n$', i.e.\  the set of all real numbers in $[0,1)$ whose continued fraction expansions begin with $(a_1, \dots, a_n).$
%

For any integer vector $(a_1,\dots,a_n)\in \N^n$ with $n\geq 1$, write
\begin{equation*}\label{cyl}
I_n(a_1,\dots,a_n):=\left\{x\in [0, 1): a_1(x)=a_1, \dots, a_n(x)=a_n\right\}
\end{equation*}
for the corresponding `cylinder of order $n$', i.e.\  the set of all real numbers in $[0,1)$ whose continued fraction expansions begin with $(a_1, \dots, a_n).$

We will frequently use the following well known properties of continued fraction expansions.  They are explained in the standard texts \cite{IosKra_02,  Khi_64}.

\begin{pro}\label{pp3} For any {positive} integers $a_1,\dots,a_n$, let $p_n=p_n(a_1,\dots,a_n)$ and $q_n=~q_n(a_1,\dots,a_n)$ be defined recursively by \eqref{recu}. {Then:}
\begin{enumerate}[label={\rm (\subscript{\rm P}{\arabic*})}]
\item
\begin{eqnarray*}
I_n(a_1,a_2,\dots,a_n)= \left\{
\begin{array}{ll}
         \left[\frac{p_n}{q_n}, \frac{p_n+p_{n-1}}{q_n+q_{n-1}}\right)     & {\rm if }\ \
         n\ {\rm{is\ even}};\\
         \left(\frac{p_n+p_{n-1}}{q_n+q_{n-1}}, \frac{p_n}{q_n}\right]     & {\rm if }\ \
         n\ {\rm{is\ odd}}.
\end{array}
        \right.
\end{eqnarray*}
{\rm Thus, its length is given by} \begin{equation*}\label{lencyl}
\frac{1}{2q_n^2}\leq |I_n(a_1,\ldots,a_n)|=\frac{1}{q_n(q_n+q_{n-1})}\leq \frac1{q_n^2},
\end{equation*}
{\rm since} $$
 p_{n-1}q_n-p_nq_{n-1}=(-1)^n, \ {\rm for \ all }\ n\ge 1.
 $$
%
%\item For any $n\geq 1$, $q_n\geq 2^{(n-1)/2}$.
%
%\item $$\frac{q_{n-1}}{q_n}=[a_n, a_{n-1},
%\dots, a_1].$$
%
%\item $$
%\big|q_{n-1}(x)x-p_{n-1}(x)\big|=\frac{1}{q_n(x)+T^n(x)\cdot q_{n-1}(x)}=\frac{1}{q_n(x)(1+T^n(x)  \cdot\frac{q_{n-1}(x)}{q_n(x)})},
%$$

\item \begin{equation*}\label{p7}
\frac{1}{3a_{n+1}(x)q^2_n(x)}\, <\, \Big|x-\frac{p_n(x)}{q_n(x)}\Big|=\frac{1}{q_n(x)(q_{n+1}(x)+T^{n+1}(x)  q_n(x))}\, < \,\frac{1}{a_{n+1}q^2_n(x)}.
\end{equation*}
\end{enumerate}
\end{pro}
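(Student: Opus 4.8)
The statement is classical continued‑fraction material (as the text itself observes), so the plan is just to assemble the standard arguments; I do not expect any real obstacle beyond bookkeeping of signs, parities, and the boundary conventions $p_{-1}=1,\,q_{-1}=0,\,p_0=0,\,q_0=1$. First I would prove the determinant identity $p_{n-1}q_n-p_nq_{n-1}=(-1)^n$ for all $n\ge 0$ by induction on $n$: it holds at $n=0$ since $p_{-1}q_0-p_0q_{-1}=1$, and substituting \eqref{recu} into $p_nq_{n+1}-p_{n+1}q_n$ gives $p_n(a_{n+1}q_n+q_{n-1})-(a_{n+1}p_n+p_{n-1})q_n=-(p_{n-1}q_n-p_nq_{n-1})$, so the sign flips at each step; in particular $\gcd(p_n,q_n)=1$. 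The same recurrence together with $a_i\ge 1$ gives $0=q_{-1}\le q_0\le q_1\le\cdots$ with $q_{n-1}\le q_n$, which is all I need about the growth of the denominators.

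For the description and length of $I_n$ I would use the Möbius‑transformation picture. Set $g_n(t):=\frac{tp_n+p_{n-1}}{tq_n+q_{n-1}}$; a short induction on $n$ using \eqref{recu} shows $[a_1,\dots,a_n,t]=g_n(t)$ for every $t>0$, so $g_n(\infty)=p_n/q_n$ and $g_n(1)=\frac{p_n+p_{n-1}}{q_n+q_{n-1}}$. Since $g_n$ is linear fractional with $g_n'(t)=\frac{-(-1)^n}{(tq_n+q_{n-1})^2}$ and its only pole at $t=-q_{n-1}/q_n\le 0$, it is strictly monotone on $(1,\infty]$, increasing exactly when $n$ is odd. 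Now $x\in I_n(a_1,\dots,a_n)$ precisely when $x=g_n\!\big(1/T^n(x)\big)$ with $1/T^n(x)\in(1,\infty]$, and $T^n$ maps $I_n$ onto $[0,1)$; hence $I_n=g_n\!\big((1,\infty]\big)$, the half‑open interval with endpoints $p_n/q_n$ (included, from $t=\infty$) and $\frac{p_n+p_{n-1}}{q_n+q_{n-1}}$ (excluded, from $t=1$, where $a_n$ would jump to $a_n+1$), the parity of $n$ deciding which is the left endpoint — exactly the two displayed cases. Its length is $|g_n(\infty)-g_n(1)|=\frac{|p_nq_{n-1}-p_{n-1}q_n|}{q_n(q_n+q_{n-1})}=\frac{1}{q_n(q_n+q_{n-1})}$, and $0\le q_{n-1}\le q_n$ converts this into $\frac{1}{2q_n^2}\le|I_n|\le\frac{1}{q_n^2}$.

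For the estimate on $|x-p_n/q_n|$, take $x$ with partial quotients beginning $a_1,\dots,a_{n+1}$ and write $x=g_n(s)$, $s:=1/T^n(x)=a_{n+1}+T^{n+1}(x)\in[a_{n+1},a_{n+1}+1)$. Then $x-\frac{p_n}{q_n}=g_n(s)-g_n(\infty)=\frac{(-1)^{n+1}}{q_n(sq_n+q_{n-1})}$, and $sq_n+q_{n-1}=(a_{n+1}+T^{n+1}(x))q_n+q_{n-1}=q_{n+1}+T^{n+1}(x)q_n$, which is the displayed middle expression. The two‑sided bound then follows from $0\le T^{n+1}(x)<1$ and $q_{n+1}=a_{n+1}q_n+q_{n-1}$: on one side $q_{n+1}+T^{n+1}(x)q_n\ge q_{n+1}\ge a_{n+1}q_n$ gives $|x-\frac{p_n}{q_n}|\le\frac{1}{a_{n+1}q_n^2}$; on the other $q_{n+1}+T^{n+1}(x)q_n<q_{n+1}+q_n=(a_{n+1}+1)q_n+q_{n-1}\le(a_{n+1}+2)q_n\le 3a_{n+1}q_n$ (using $q_{n-1}\le q_n$ and $a_{n+1}\ge 1$) gives $|x-\frac{p_n}{q_n}|>\frac{1}{3a_{n+1}q_n^2}$.

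The only place demanding any care is the matching of the open/closed endpoints and the parity in the cylinder formula (and the degenerate small‑$n$ cases where $q_{-1}=0$); otherwise everything is routine — which is precisely why the excerpt is content to cite the standard references for it.
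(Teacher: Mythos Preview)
Your proof is correct and entirely standard. The paper itself gives no proof of this proposition: it merely states the result and refers the reader to the classical texts \cite{IosKra_02, Khi_64}, describing these as ``well known properties of continued fraction expansions''. Your M\"obius-transformation argument via $g_n(t)=(tp_n+p_{n-1})/(tq_n+q_{n-1})$ is exactly the classical computation one finds in those references, so there is nothing substantive to compare. (One harmless slip: the sign of $x-\tfrac{p_n}{q_n}$ comes out as $(-1)^n$ rather than $(-1)^{n+1}$, since the numerator is $p_{n-1}q_n-p_nq_{n-1}$; you pass to absolute values immediately, so nothing is affected.)
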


The results of  \cite{DaSc70, KlWad16} rely crucially on the following observation, \begin{align*}
x\in D(\psi) &\Longleftrightarrow |q_{n-1}x-p_{n-1}| \,<\, \psi(q_n)  \ {\text{for all}}\ n\gg 1 \\
&\Longleftrightarrow [a_{n+1}, a_{n+2},\dots]\cdot [a_n, a_{n-1},\dots, a_1]\, < \, \frac1{\Psi(q_n)}
  \ {\text{for all}}\ n\gg 1.\end{align*}
Where the {auxiliary} function $\Psi$ is defined in \eqref{twopsis1}. 
% \begin{equation}\label{twopsis}{
%\Psi(q):=\frac{q\psi(q)}{1-q\psi(q)} = \frac{1}{1-q\psi(q)} - 1.
%}\end{equation}
This leads to the following criterian for Dirichlet improvability.

\begin{lem}[\cite{KlWad16}, Lemma 2.2)]\label{kwlem}Let $x\in [0, 1)\smallsetminus\Q$, and let $\psi:[t_0, \infty)\to\R_+$ be non-increasing. Then
\begin{itemize}
\item [{\rm (i)}] $x$ is $\psi$-Dirichlet improvable %improvable
 if $a_{n+1}(x)a_n(x)\, \le\,\Psi(q_n)/4$ for all sufficiently large $n$.
\item [{\rm (ii)}]$x$ is $\psi$-Dirichlet non-improvable
 if $a_{n+1}(x)a_n(x)\, >\, \Psi(q_n)$ for infinitely many~$n$.
\end{itemize}

\end{lem}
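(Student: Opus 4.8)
The plan is to reduce everything to the characterisation of $D(\psi)$ recorded just before the lemma and then to control the product $[a_{n+1},a_{n+2},\dots]\,[a_n,a_{n-1},\dots,a_1]$ by a single elementary two-sided estimate. First I would put that characterisation into a form ready for use. Shifting the index $n\mapsto n-1$ in the middle identity of (P2) of Proposition~\ref{pp3} and multiplying by $q_{n-1}$ gives $|q_{n-1}x-p_{n-1}|=\bigl(q_n+q_{n-1}T^n(x)\bigr)^{-1}$, where $T^n(x)=[a_{n+1},a_{n+2},\dots]$. Hence $|q_{n-1}x-p_{n-1}|<\psi(q_n)$ is equivalent to $q_n+q_{n-1}T^n(x)>1/\psi(q_n)$, and since \eqref{twopsis1} gives $1/\psi(q_n)=q_n\bigl(1+1/\Psi(q_n)\bigr)$, this is in turn equivalent to $T^n(x)\,q_{n-1}/q_n>1/\Psi(q_n)$. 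By the reversal identity coming from \eqref{recu} one has $q_{n-1}/q_n=[a_n,a_{n-1},\dots,a_1]$, so the product in question is exactly $T^n(x)\,q_{n-1}/q_n$. Thus $x\in D(\psi)$ precisely when this product exceeds $1/\Psi(q_n)$ for all large $n$, and $x\in D^c(\psi)$ precisely when it drops below $1/\Psi(q_n)$ for infinitely many $n$.

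The heart of the argument will be the arithmetic sandwich \[\frac{1}{4\,a_{n+1}a_n}\;<\;T^n(x)\,\frac{q_{n-1}}{q_n}\;<\;\frac{1}{a_{n+1}a_n}.\] I would obtain it from two one-line bounds on the factors: from $1/T^n(x)=a_{n+1}+[a_{n+2},\dots]\in(a_{n+1},a_{n+1}+1)$ we get $\tfrac{1}{a_{n+1}+1}<T^n(x)<\tfrac{1}{a_{n+1}}$, and from $q_n/q_{n-1}=a_n+q_{n-2}/q_{n-1}\in(a_n,a_n+1)$ we get $\tfrac{1}{a_n+1}<q_{n-1}/q_n<\tfrac{1}{a_n}$. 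Multiplying these and using $(a_{n+1}+1)(a_n+1)\le 4\,a_{n+1}a_n$ (each factor at most doubles because every partial quotient is $\ge 1$) produces the displayed sandwich.

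With the sandwich and the characterisation, both parts follow immediately. For (i), the hypothesis $a_{n+1}a_n\le\Psi(q_n)/4$ for all large $n$ means $\tfrac{1}{4a_{n+1}a_n}\ge\tfrac{1}{\Psi(q_n)}$, so the lower bound forces $T^n(x)\,q_{n-1}/q_n>1/\Psi(q_n)$ for all large $n$; by the characterisation $x\in D(\psi)$. For (ii), whenever $a_{n+1}a_n>\Psi(q_n)$ the upper bound gives $T^n(x)\,q_{n-1}/q_n<\tfrac{1}{a_{n+1}a_n}<\tfrac{1}{\Psi(q_n)}$, so if this occurs for infinitely many $n$ the product lies below $1/\Psi(q_n)$ infinitely often and $x\in D^c(\psi)$.

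I do not expect a serious obstacle; the only point that genuinely needs care is the asymmetry of the two thresholds in the statement, namely $\Psi(q_n)/4$ in (i) against $\Psi(q_n)$ in (ii). This is dictated exactly by the sandwich: the clean upper bound $1/(a_{n+1}a_n)$ lets (ii) compare against $\Psi(q_n)$ directly, while the lower bound carries the factor $4$ inherited from $(a_{n+1}+1)(a_n+1)\le 4a_{n+1}a_n$, which is precisely what is absorbed by shrinking the threshold to $\Psi(q_n)/4$ in (i). Since both one-sided bounds are sharp up to constants, this constant $4$ is intrinsic to the method and cannot be removed by the present argument.
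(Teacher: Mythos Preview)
The paper does not give its own proof of this lemma; it is simply quoted from \cite{KlWad16}, with the key equivalence $x\in D(\psi)\Leftrightarrow [a_{n+1},a_{n+2},\dots]\cdot[a_n,\dots,a_1]$ compared to $1/\Psi(q_n)$ recorded just before the statement. Your argument is correct and is exactly the standard one: you derive that equivalence from the identity in (P$_2$) together with \eqref{twopsis1}, and then pass to the product $a_na_{n+1}$ via the two-sided bound $\tfrac{1}{4a_na_{n+1}}<T^n(x)\,q_{n-1}/q_n<\tfrac{1}{a_na_{n+1}}$, which is precisely what produces the asymmetric thresholds $\Psi(q_n)/4$ and $\Psi(q_n)$.

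One small observation worth flagging: under the paper's own convention $[a_1,a_2,\dots]=1/(a_1+\cdots)$, the displayed equivalence just before Lemma~\ref{kwlem} has the inequality the wrong way round (it should read $>1/\Psi(q_n)$, as your derivation shows, not $<$). You use the correct direction, so this does not affect your proof, but it is a typo in the paper's display rather than an error on your part.
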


 As a consequence of this lemma, and some elementary properties of continued fractions (see \cite[\S2]{HKWW}, we have  the inclusions
\begin{equation*}\label{l1.2}
\mathcal K(3\Psi)\subseteq G_1(\Psi)\subset G(\Psi) \subset D^c(\psi)\subset G(\Psi/4),\end{equation*}
where
\begin{align*}\label{gpsi}
G(\Psi)&:=\Big\{x\in [0,1): a_n(x)a_{n+1}(x)\,>\, \Psi(q_n(x))  \ {\text{ for i.m.}}\ n\in \N\Big\},\\
G_1(\Psi)&:= \left\{x\in[0, 1): a_{n+1}(x)\,>\, \Psi(q_n(x))  \ \ {\rm for \ i.m.\ }n\in\N\right\},\\
\mathcal K(3\Psi)&:=\left\{x\in[0,1): \left|x-\frac pq\right|<\frac{1}{3q^2\Psi(q)} \ {\rm for \ i.m. \ } (p, q)\in \Z\times \N \right\}.
\end{align*}

Hence Theorems \ref{HKWWthm}  and \ref{thm1} can be restated in terms of the set $G(\Psi)$ as:

\begin{thm}[{\cite{HKWW}, 2018}] \label{HKWW2} Let $\Psi: [t_0, \infty)\to \R_+$ be a non-decreasing function.  Let $f$ be a dimension function such that \eqref{ffm10} is satisfied. Then
$$ \H^f\big(G(\Psi)\big)=\begin {cases}
 0 \ & {\rm if } \quad \sum\limits_{q} {q}f\left(\frac{1}{{q^2\Psi({q})}} \right) \, < \, \infty; \\[2ex]
 \infty \ & {\rm if } \quad \sum\limits_{q} {q}f\left(\frac{1}{{q^2\Psi({q})}} \right)  \, = \, \infty.
\end {cases}$$

\end{thm}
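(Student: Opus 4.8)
The plan is to treat the two cases by different machinery: the convergence (upper bound) via a covering argument reorganised around consecutive convergent denominators, and the divergence (lower bound) by combining the inclusion $\mathcal K(3\Psi)\subseteq G(\Psi)$ with Jarník's Theorem \ref{Jarnik}. Throughout I would use that, by \cite[Lemma 3.1]{HKWW}, the ESL hypothesis \eqref{ffm10} forces both \eqref{f2infty} and \eqref{ff2} to hold; in particular the quasi-monotonicity \eqref{ff2} delivers the constant comparison $f(cx)\asymp f(x)$ for every fixed $c>0$, which I would invoke freely.

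\textbf{Divergence.} Since $\Psi$ is non-decreasing, so is $3\Psi$, and $f$ meets the hypotheses of Theorem \ref{Jarnik}. Using $f\big(\tfrac{1}{3t^2\Psi(t)}\big)\asymp f\big(\tfrac{1}{t^2\Psi(t)}\big)$ from \eqref{ff2}, the given divergence of $\sum_q q f\big(\tfrac{1}{q^2\Psi(q)}\big)$ is equivalent to the divergence of the Jarník series $\sum_t t f\big(\tfrac{1}{3t^2\Psi(t)}\big)$, so $\H^f(\mathcal K(3\Psi))=\infty$; the inclusion $\mathcal K(3\Psi)\subseteq G(\Psi)$ then gives $\H^f(G(\Psi))=\infty$.

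\textbf{Convergence.} Write $G(\Psi)=\limsup_n E_n$ with $E_n=\{x:a_n(x)a_{n+1}(x)>\Psi(q_n(x))\}$, fix a large $N$, and cover $G(\Psi)\subseteq\bigcup_{n\ge N}E_n$. Inside each $n$-cylinder $I_n(a_1,\dots,a_n)$ the locus $\{a_{n+1}>\Psi(q_n)/a_n\}$ is a single subinterval $J_n$, and Proposition \ref{pp3} together with $\sum_{a_{n+1}\ge m}|I_{n+1}|\asymp (mq_n^2)^{-1}$ gives $|J_n|\asymp \frac{a_n}{q_n^2\Psi(q_n)}$ when $a_n\le\Psi(q_n)$ and $J_n=I_n$ (so $|J_n|\asymp q_n^{-2}$) otherwise. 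I would cover each $J_n$ by one ball and reindex by the coprime pair $(p,q):=(q_{n-1},q_n)$: since $q_n/q_{n-1}=[a_n,\dots,a_1]$, the pair $(p,q)$ pins down $(a_1,\dots,a_n)$ and $n$ up to bounded multiplicity, with $a_n\asymp q/p$, whence $|J_n|\asymp \frac{1}{pq\Psi(q)}$ in the first regime and $f(|J_n|)=f(q^{-2})\le f\big(\tfrac{1}{pq\Psi(q)}\big)$ in the second. Summing produces
$$\H^f\big(G(\Psi)\big)\ \lesssim\ \sum_{q\ge Q_N}\ \sum_{1\le p\le q} f\Big(\frac{1}{pq\Psi(q)}\Big),$$
with $Q_N\to\infty$ as $N\to\infty$. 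By the equivalence \eqref{equiv} this double sum is comparable to $\sum_q q f\big(\tfrac{1}{q^2\Psi(q)}\big)$, which converges, so letting $N\to\infty$ kills the tail and forces $\H^f(G(\Psi))=0$.

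\textbf{Main obstacle.} The crux, and the only place where \eqref{ffm10} is genuinely needed, is the passage from the covering series $\sum_q\sum_{p\le q} f\big(\tfrac{1}{pq\Psi(q)}\big)$ back to the single Jarník-type series $\sum_q q f\big(\tfrac{1}{q^2\Psi(q)}\big)$ through \eqref{equiv}: proving \eqref{equiv} amounts to controlling how $f$ absorbs the decay of $\frac{1}{pq\Psi(q)}$ as $p$ runs over $1,\dots,q$, which is precisely the sub-linear growth encoded by \eqref{ffm10}. The remaining bookkeeping — separating the regimes $a_n\lessgtr\Psi(q_n)$ and verifying the $O(1)$-to-one correspondence between cylinders and pairs $(q_{n-1},q_n)$ — I expect to be routine.
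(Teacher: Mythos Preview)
Your proposal is correct and follows essentially the same route as the paper (and the original \cite{HKWW} argument it recalls in \S4.1): Jarn\'ik via the inclusion $\mathcal K(3\Psi)\subseteq G(\Psi)$ for divergence, and for convergence the cylinder-level cover by the intervals $J_n$, reindexed by the coprime pair $(q_{n-1},q_n)=(p,q)$, with the ESL hypothesis \eqref{ffm10} entering only through the series comparison \eqref{equiv}. The sole cosmetic difference is that the paper formally routes the statement through its main Theorem~\ref{thm1} by showing that the double series there collapses to the single Jarn\'ik series under \eqref{ffm10}, whereas you bypass Theorem~\ref{thm1} and argue directly; the underlying covering and the role of \eqref{equiv} are identical.
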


\begin{thm}\label{thm3} Let $\Psi: [t_0, \infty)\to \R_+$ be a non-decreasing function. Let $\psi$ be a non-increasing positive function with $t\psi(t)<~1$ for all large $t$.  Let $f$ be a dimension function satisfying \eqref{f2infty} and \eqref{ff2}. Then
$$\H^f\big(G(\Psi)\big)=\begin {cases}
 0 \ & {\rm if } \quad \sum\limits_{k=1}^\infty\  \sum\limits_{ j<\log_2\Psi(2^k)} 2^{2k+j}f\left(\frac{2^{-(2k+j)}}{\Psi(2^{k+j})}\right) \, < \, \infty; \\[2ex]
 \infty \ & {\rm if } \quad  \sum\limits_{k=1}^\infty \  \sum\limits_{ j<\log_2\Psi(2^k)} 2^{2k+j}f\left(\frac{2^{-(2k+j)}}{\Psi(2^{k+j})}\right)  \,= \, \infty.
\end {cases}$$

%
%
%
%Suppose one of the following hold:
%\begin{itemize}
%\item [\rm (I)] Let  $a_i\in\R_{\geq0}$ for $1\leq i\leq n$,
%\begin{equation}\label{fnesl}f(x)=x(\log(1/x))^{a_1}x(\log\log(1/x))^{a_2}\cdots x(\log\log\cdots\log(1/x))^{a_n}.
%\end{equation}
%\item [\rm (II)] Let $f$ be a dimension function such that 
%$x \mapsto x^{-1} f(x)$ is decreasing, and $x^{-1} f(x) \to~\infty$  as  $x \to 0.$  Let $\Psi$ be such that, for all $ x>0$ and $Q>1$, the following condition holds
%\begin{equation}\label{condpsi}
%\Psi(Q^x)\asymp\Psi(Q),
%\end{equation}
%where the implied constant depends only on $x$.
%\end{itemize}
% %Let $f$ be a dimension function such that $x \mapsto x^{-1} f(x) \text{ is decreasing, and } x^{-1} f(x) \to \infty \text{ as }x \to 0.$ 
% Then
%$$ \H^f\big(G(\Psi)\big)=\begin {cases}
% 0 \ & {\rm if } \quad  \sum\limits_{t} t\log\left(\Psi(t)\right)f\left(\frac{1}{{t^2\Psi({t})}} \right) \, < \, \infty; \\[2ex]
% \infty \ & {\rm if } \quad  \sum\limits_{t} t\log\left(\Psi(t)\right)f\left(\frac{1}{{t^2\Psi({t})}} \right)  \, = \, \infty.
%\end {cases}$$
%

\end{thm}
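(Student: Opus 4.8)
The plan is to prove Theorem~\ref{thm3} by analysing the limsup set $G(\Psi)$ directly through its natural covering by cylinders, splitting into the convergence and divergence halves. For a fixed $n$, the set of $x$ with $a_n(x)a_{n+1}(x)>\Psi(q_n(x))$ decomposes over choices of $(a_1,\dots,a_n)$ and then over choices of $a_{n+1}$; since $q_n$ is essentially determined (up to a bounded factor) by $a_1,\dots,a_n$, and since on $I_n(a_1,\dots,a_n)$ the length is $\asymp q_n^{-2}$, the union over $a_{n+1}>\Psi(q_n)/a_n$ of the cylinders $I_{n+1}(a_1,\dots,a_{n+1})$ has diameter of order $\frac{1}{a_n q_n^2 \Psi(q_n)}$ and the natural $f$-cost of covering it is controlled by $f\!\left(\frac{1}{a_n q_n^2\Psi(q_n)}\right)$ times the number of admissible $a_{n+1}$ (or, more carefully, one uses a finer cover when $f(x)/x$ is large). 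First I would reduce the two series in the theorem to the ``combinatorial'' series $\sum_n \sum_{(a_1,\dots,a_n)}\ f\!\left(\frac{1}{a_n q_n^2 \Psi(q_n)}\right)$ and its dyadic repackaging, which is exactly where the double sum over $k$ and $j$ in the statement comes from: writing $q_n\asymp 2^k$ and $a_n \asymp 2^j$, the measure-theoretic count of cylinders with $q_n\asymp 2^k$ and $a_n\asymp 2^j$ is $\asymp 2^{2k}2^{-j}$ (by the Gauss measure / standard cylinder-counting estimates), and $a_{n+1}$ ranges over $\asymp \Psi(2^k)2^{-j}$ values with $q_{n+1}\asymp 2^{k+j}$, giving the summand $2^{2k+j} f\!\left(\frac{2^{-(2k+j)}}{\Psi(2^{k+j})}\right)$ after grouping, with the constraint $j<\log_2\Psi(2^k)$ reflecting that we only need $a_n a_{n+1}>\Psi(q_n)$ for \emph{some} valid $a_{n+1}\ge 1$.

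For the convergence case, the strategy is a direct Hausdorff--Cantelli / first Borel--Cantelli argument: cover $G(\Psi)$ by the tails $\bigcup_{n\ge N} \{a_na_{n+1}>\Psi(q_n)\}$, bound the $f$-sum of this cover by the dyadic series (using \eqref{ff2} to pass between $f$ evaluated at comparable arguments, and using $f(x)/x$ monotone-ish behaviour to decide whether to cover each ``fan'' of cylinders by one interval or by many small ones — whichever is cheaper), and conclude $\mathcal H^f(G(\Psi))=0$ once that series converges. The inclusion $G(\Psi)\subset G(\Psi/4)\supset D^c(\psi)$ (in fact we only need $G(\Psi)$ itself here) and the estimates in Proposition~\ref{pp3} are the only inputs; no dimension-function hypothesis beyond \eqref{ff2} is needed for convergence, consistent with the remark after Theorem~\ref{Jarnik}.

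For the divergence case, I would invoke a mass transference / Hausdorff measure lower-bound criterion for limsup sets defined by cylinders — precisely the type of criterion from \cite{HussainSimmons2} that the authors flag as requiring \eqref{f2infty}. The idea is to exhibit inside $G(\Psi)$ a Cantor-like subset supported on cylinders $I_{n+1}(a_1,\dots,a_{n+1})$ with $a_na_{n+1}\gtrsim\Psi(q_n)$, chosen along a rapidly increasing sequence $n_i$ so that the local $f$-content is preserved, and then to check that the relevant ``ubiquity'' or mass-distribution hypothesis is implied by divergence of the dyadic double series. Here \eqref{f2infty} guarantees that $f$ is genuinely ``larger than Lebesgue'' so that the Cantor construction does not collapse, and \eqref{ff2} provides the quasi-monotonicity of $f(x)/x$ that lets one compare the $f$-measure at successive scales. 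The main obstacle, and the step I expect to be most delicate, is precisely this bookkeeping in the divergence case: one must show the abstract criterion's hypotheses are equivalent to divergence of $\sum_k\sum_{j<\log_2\Psi(2^k)}2^{2k+j}f(2^{-(2k+j)}/\Psi(2^{k+j}))$ and \emph{not} of some coarser or finer series, which requires carefully matching the dyadic decomposition of scales (the $j$-parameter tracking $a_n$, which simultaneously governs how many cylinders there are at scale $2^k$ \emph{and} the jump from $q_n\asymp 2^k$ to $q_{n+1}\asymp 2^{k+j}$) against the scales at which the criterion counts mass — the two-index structure has no analogue in the classical Jarn\'ik setting and is the genuinely new technical content of the theorem.
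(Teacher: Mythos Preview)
Your strategy is the paper's: cover by the ``fans'' $J_n=\bigcup_{a_{n+1}>\Psi(q_n)/a_n}I_{n+1}$, group dyadically, apply Hausdorff--Cantelli for convergence, and invoke the Hussain--Simmons criterion (Theorem~\ref{mainthm}) for divergence. But the convergence bookkeeping you describe does not add up. The diameter of the fan $J_n$ is $\asymp \dfrac{a_n}{q_n^2\Psi(q_n)}\asymp\dfrac{1}{q_{n-1}q_n\Psi(q_n)}$, \emph{not} $\dfrac{1}{a_n q_n^2\Psi(q_n)}$: the smallest admissible $a_{n+1}$ is $\Psi(q_n)/a_n$, so $|J_n|\asymp 1/(q_n\cdot q_{n+1})$ with $q_{n+1}\asymp (\Psi(q_n)/a_n)q_n$, and you are off by $a_n^2$. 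With your (correct) count $\asymp 2^{2k-j}$ for $q_n\asymp 2^k$, $a_n\asymp 2^j$, your diameter would yield $\sum 2^{2k-j}f\bigl(2^{-(2k+j)}/\Psi(2^k)\bigr)$, which is not the series in the statement; and the line ``$q_{n+1}\asymp 2^{k+j}$'' is consistent with neither indexing. The paper instead tracks $q_{n-1}\asymp 2^k$ and $a_n\asymp 2^j$, whence $q_n\asymp 2^{k+j}$, $|J_n|\asymp 2^{-(2k+j)}/\Psi(2^{k+j})$, and the count of admissible $(a_1,\dots,a_n)$ is $\asymp 2^{2k}\cdot 2^j$, producing exactly the summand $2^{2k+j}f\bigl(2^{-(2k+j)}/\Psi(2^{k+j})\bigr)$.

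Second, the restriction $j<\log_2\Psi(2^k)$ only accounts for the regime $a_n\le\Psi(q_{n-1})$; when $a_n>\Psi(q_{n-1})$ any $a_{n+1}\ge 1$ already gives $a_na_{n+1}>\Psi(q_n)$, and these points are not covered by the double series at all. The paper handles this by splitting $G(\Psi)\subset\mathcal A_1(\Psi)\cup\mathcal A_2(\Psi)$ according to whether $a_n\le\Psi(q_{n-1})$, noting $\mathcal A_2(\Psi)\subset G_1(\Psi)\subset\mathcal K(\Psi)$, and disposing of $\mathcal A_2$ via the convergence half of Jarn\'ik's theorem, whose series $\sum_q qf\bigl(1/(q^2\Psi(q))\bigr)$ is dominated by the double series. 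Your proposal mentions the constraint but not what happens outside it. For divergence, the paper does not build a Cantor set: it verifies hypothesis~(*) of Theorem~\ref{mainthm} by writing down the measure $\mu=\Omega^{-1}\sum_{k,\ell}\sum_{(\omega,a)} f(\rho_{k,\ell})\,\lambda_{S_{\omega a}}$ (a weighted sum of normalized Lebesgue measures on the fans $S_{\omega a}=u_{\omega a}([0,a/\Psi(Q_\omega a)])$) and checking \eqref{muBbound} via the counting estimate $\#A_{k,\ell}'\gtrsim \#A_{k,\ell}\,|B_0|$; your sketch is compatible in spirit but does not identify this step.
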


It can readily be seen that the divergence case of Jarnik's theorem played a pivotal role in proving the divergence case of the Theorem \ref{HKWWthm}.  Further, it is worth pointing out that the difference set 
$G(\Psi)\setminus \mathcal K(3\Psi)$ is non-trivial as the Hausdorff dimension of this set is the same as that of $G(\psi)$ as proved in \cite{BBH}. We refer the reader to \cite{BHKW, HWX} for the Lebesgue measure and Hausdorff dimension results for a generalised form of the  set $G(\Psi)$.

For any $\tau\geq 0$, as a consequence of Theorem \ref{HKWW2},  the Hausdorff dimension of the sets of Dirichlet non-improvable numbers with prescribed order of approximation $\tau$ can be deduced. Define the set with \emph{order $\tau$} as
$$G(\tau):=\left\{x\in [0,1): \limsup_{n\to\infty}\frac{\log\left(a_n(x)a_{n+1}(x)\right)}{\log q_n(x)}\geq \tau\right\}$$
and \emph{exact order $\tau$} as 
$$G(\tau_{\rm exac}):=\left\{x\in [0,1): \limsup_{n\to\infty}\frac{\log\left(a_n(x)a_{n+1}(x)\right)}{\log q_n(x)}= \tau\right\}.$$
\begin{cor}
\begin{align*}\dim_\mathcal H G(\tau)=G(\tau_{\rm exac})=\frac{2}{\tau+2}.
\end{align*}

\end{cor}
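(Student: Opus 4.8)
The plan is to reduce everything to the ``power-law'' sets
$$G(q^{\beta}):=\bigl\{x\in[0,1):a_n(x)a_{n+1}(x)>q_n(x)^{\beta}\ \text{for infinitely many }n\bigr\},$$
and to apply Theorem~\ref{HKWW2} with $\Psi(q)=q^{\beta}$ and the dimension function $f(x)=x^{s}$, which is essentially sub-linear (it satisfies \eqref{ffm10}) whenever $0\le s<1$. For these choices the series $\sum_{q}q\,f\!\bigl(\tfrac{1}{q^{2}\Psi(q)}\bigr)$ equals $\sum_{q}q^{\,1-s(2+\beta)}$, which converges precisely when $s>\tfrac{2}{2+\beta}$. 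Feeding this into the convergence/divergence dichotomy yields, for every $\beta>0$,
\begin{equation*}
\dim_{\mathcal H}G(q^{\beta})=\frac{2}{2+\beta},\qquad
\mathcal H^{2/(2+\beta)}(G(q^{\beta}))=\infty,\qquad
\mathcal H^{2/(2+\beta)}(G(q^{\beta'}))=0\ \ \text{for all }\beta'>\beta;
\end{equation*}
the middle fact is the divergence case at the endpoint exponent $s=\tfrac{2}{2+\beta}$ (where the series is the harmonic series), and the last is the convergence case, since then $s(2+\beta')>2$. Two elementary set inclusions, immediate from the definition of the $\limsup$, will do the rest: $G(q^{\tau})\subseteq G(\tau)\subseteq G(q^{\beta})$ for all $0<\beta<\tau$, and $G(\tau_{\mathrm{exac}})\subseteq G(\tau)$.

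From $G(q^{\tau})\subseteq G(\tau)\subseteq G(q^{\beta})$ I get $\tfrac{2}{2+\tau}=\dim_{\mathcal H}G(q^{\tau})\le\dim_{\mathcal H}G(\tau)\le\dim_{\mathcal H}G(q^{\beta})=\tfrac{2}{2+\beta}$ for every $\beta<\tau$, and letting $\beta\uparrow\tau$ pins down $\dim_{\mathcal H}G(\tau)=\tfrac{2}{2+\tau}$. For the exact-order set put $d:=\tfrac{2}{2+\tau}$ and write $G(\tau_{\mathrm{exac}})=G(\tau)\setminus U$, where $U:=\{x:\limsup_{n}\tfrac{\log(a_n(x)a_{n+1}(x))}{\log q_n(x)}>\tau\}=\bigcup_{k\ge1}G(q^{\tau+1/k})$. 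The third fact above gives $\mathcal H^{d}(G(q^{\tau+1/k}))=0$ for every $k$, hence $\mathcal H^{d}(U)=0$ by countable subadditivity. Since $G(q^{\tau})\subseteq G(\tau)$ we have $G(q^{\tau})\setminus U\subseteq G(\tau_{\mathrm{exac}})$, and subadditivity of $\mathcal H^{d}$ gives $\infty=\mathcal H^{d}(G(q^{\tau}))\le\mathcal H^{d}(G(q^{\tau})\setminus U)+\mathcal H^{d}(U)$, so $\mathcal H^{d}(G(\tau_{\mathrm{exac}}))=\infty$ and $\dim_{\mathcal H}G(\tau_{\mathrm{exac}})\ge d$. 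Combined with $\dim_{\mathcal H}G(\tau_{\mathrm{exac}})\le\dim_{\mathcal H}G(\tau)=d$, this gives the claimed value.

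The step I expect to be the real obstacle is the lower bound for the exact-order set. An argument at the level of Hausdorff \emph{dimensions} alone is bound to fail: the sets $G(q^{\tau+1/k})$ that must be deleted from $G(q^{\tau})$ have dimensions $\tfrac{2}{2+\tau+1/k}\uparrow\tfrac{2}{2+\tau}=\dim_{\mathcal H}G(q^{\tau})$, so a priori removing their union could collapse the dimension. The saving feature is the sharp \emph{measure} form of Theorem~\ref{HKWW2} at the critical exponent $d$: there $G(q^{\tau})$ carries infinite $\mathcal H^{d}$-measure while each $G(q^{\tau+1/k})$ is $\mathcal H^{d}$-null, so the deleted set is negligible and cannot affect the infinite measure. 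Finally, the boundary case $\tau=0$ is trivial and handled separately: $\limsup_{n}\tfrac{\log(a_na_{n+1})}{\log q_n}\ge0$ always, so $G(0)=[0,1)$, while $G(0_{\mathrm{exac}})$ contains every badly approximable number (bounded partial quotients force the ratio to $0$), a set of full Hausdorff dimension; thus both dimensions equal $1=\tfrac{2}{2+0}$.
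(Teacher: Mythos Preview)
Your proof is correct and follows exactly the route the paper indicates: the corollary is stated in the paper without proof, merely ``as a consequence of Theorem~\ref{HKWW2}'', and you carry this out by specialising to $\Psi(q)=q^{\beta}$ and $f(x)=x^{s}$. Your handling of $G(\tau_{\rm exac})$ via the critical $\mathcal H^{d}$-measure (infinite on $G(q^{\tau})$, null on each $G(q^{\tau+1/k})$) is precisely the extra step needed beyond a dimension-only argument, and it is the natural way to squeeze the exact-order result out of Theorem~\ref{HKWW2}; the paper leaves all of this implicit.
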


We refer the reader to \cite{HuangWu} for the Hausdorff dimension of level sets obtained by replacing $\limsup$ with $\lim$ in the definition of  $G(\tau_{\rm exac})$.

%\medskip
%
%
%\begin{thm}[Bakhtawar-Bos-Hussain, 2019]\label{thmdim} Let ${C}>0$. Then for $\Psi$ as above we have $$
%\hdim \Big(G(\Psi)\smallsetminus \mathcal K({C}\Psi)\Big)=\frac{2}{\tau+2}, \text{ where } \ \tau=\liminf_{q\to \infty}\frac{\log \Psi(q)}{\log q}.
%$$
%  \end{thm}

Finally, for completeness we give a very brief introduction to Hausdorff measures and dimension.  For further details we refer to the beautiful texts \cite{BeDo_99, F_14}.

\subsection{Hausdorff measure and dimension}\label{HM}\

Let $f$ be a dimension function and let
$E\subset \R^n$.
 Then, for any $\rho>0$ a countable collection $\{B_i\}$ of balls in
$\R^n$ with diameters $\mathrm{diam} (B_i)\le \rho$ such that
$E\subset \bigcup_i B_i$ is called a $\rho$-cover of $E$.
Let
\[
\H_\rho^f(E)=\inf \sum_i f\big(\mathrm{diam}(B_i)\big),
\]
where the infimum is taken over all possible $\rho$-covers $\{B_i\}$ of $E$. It is easy to see that $\H_\rho^f(E)$ increases as $\rho$ decreases and so approaches a limit as $\rho \rightarrow 0$. This limit could be zero or infinity, or take a finite positive value. Accordingly, the \textit{Hausdorff $s$-measure $\H^f$} of $E$ is defined to be
\[
\H^f(E)=\lim_{\rho\to 0}\H_\rho^f(E).
\]

It is easily verified that Hausdorff measure is monotonic and countably sub-additive, and that $\H^f(\varnothing)=0$. Thus it is an outer measure on $\R^n$.

In the case when $f(x)=x^s$ for some $s\geq 0$, we write $\mathcal H^s$ for $\mathcal H^f$. Furthermore, for any subset $E$ one can verify that there exists a unique critical value of $s$ at which $\H^s(E)$ `jumps' from infinity to zero. The value taken by $s$ at this discontinuity is referred to as the \textit{Hausdorff dimension of  $E$} and  is denoted by $\hdim E $; i.e.,
\[
\hdim E :=\inf\{s\ge 0:\; \H^s(E)=0\}.
\] When $s=n$,  $\H^n$ coincides with standard Lebesgue measure on $\R^n$.

Computing Hausdorff dimension of a set is typically accomplished in two steps: obtaining the upper and lower bounds separately.

Upper bounds often can be handled by finding appropriate coverings. When dealing with a limsup set, one %always
 {usually applies} the Hausdorff measure version of the famous Borel-Cantelli lemma (see Lemma 3.10 of \cite{BeDo_99}).

\begin{pro}\label{bclem}
    Let $\{B_i\}_{i\ge 1}$ be a sequence of measurable  sets in $\R^n$ and suppose that for some dimension function $f$,  $\sum_i f\big(\mathrm{diam}(B_i)\big) \, < \, \infty.$ Then  $\H^f(
    %\bigcap_{k=1}^\infty \bigcup_{i=k}^\infty
    {\limsup_{i\to\infty}B_i})=0.$
\end{pro}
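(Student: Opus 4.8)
The plan is to exploit the elementary set-theoretic identity
$$\limsup_{i\to\infty}B_i=\bigcap_{N\ge 1}\bigcup_{i\ge N}B_i,$$
which shows that for \emph{every} $N$ the tail family $\{B_i\}_{i\ge N}$ already covers $\limsup_{i\to\infty}B_i$. The strategy is then to verify that, for each fixed mesh size $\rho>0$, these tail families are admissible $\rho$-covers once $N$ is large, and to bound the cost of such a cover by the tail of the convergent series $\sum_i f(\mathrm{diam}(B_i))$, which necessarily tends to $0$.

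First I would record the one genuinely substantive observation, namely that $\mathrm{diam}(B_i)\to 0$ as $i\to\infty$. Convergence of $\sum_i f(\mathrm{diam}(B_i))$ forces $f(\mathrm{diam}(B_i))\to 0$; and since $f$ is a \emph{dimension function}---increasing, continuous, strictly positive on $(0,\infty)$, with $f(r)\to 0$ as $r\to 0$---the relation $f(d_i)\to 0$ is equivalent to $d_i\to 0$. Concretely, were some subsequence to satisfy $\mathrm{diam}(B_{i_k})\ge\delta>0$, then monotonicity would give $f(\mathrm{diam}(B_{i_k}))\ge f(\delta)>0$, contradicting $f(\mathrm{diam}(B_i))\to 0$. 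This is the only step where the hypothesis that $f$ is a bona fide dimension function, rather than an arbitrary summable weight, enters, and I expect it to be the main (albeit modest) obstacle: the whole point is that a convergent $f$-series controls not only the covering cost but also, through $f(r)\to 0$ iff $r\to 0$, the geometry of the cover.

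With this in hand, fix $\rho>0$ and choose $N_0$ so that $\mathrm{diam}(B_i)\le\rho$ for all $i\ge N_0$. Then for every $N\ge N_0$ the family $\{B_i\}_{i\ge N}$ is a $\rho$-cover of $\limsup_{i\to\infty}B_i$, so by the definition of $\H^f_\rho$,
$$\H^f_\rho\Big(\limsup_{i\to\infty}B_i\Big)\le\sum_{i\ge N}f\big(\mathrm{diam}(B_i)\big).$$
Since the right-hand side is the tail of a convergent series, letting $N\to\infty$ gives $\H^f_\rho(\limsup_{i\to\infty}B_i)=0$ for each fixed $\rho>0$. Finally, taking $\rho\to 0$ and invoking $\H^f=\lim_{\rho\to 0}\H^f_\rho$ yields $\H^f(\limsup_{i\to\infty}B_i)=0$, as claimed.

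A last point of bookkeeping: the definition of $\H^f_\rho$ recorded above is phrased via covers by balls, whereas the $B_i$ are assumed only measurable. In every application of this lemma in the present paper the $B_i$ are themselves balls, so the tail families are $\rho$-covers in the literal sense and no adjustment is needed; in general one may either read $\H^f$ through the diameter-based definition that permits arbitrary cover sets, or enclose each $B_i$ in a concentric ball of comparable diameter (which, since $\mathrm{diam}(B_i)\to 0$, leaves the argument intact). In either formulation the three displayed steps---tail covering, vanishing of the tail sum, and passage $\rho\to 0$---carry the proof without further input.
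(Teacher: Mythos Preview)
Your argument is correct and is precisely the standard proof of the Hausdorff--measure Borel--Cantelli lemma: use the tail $\{B_i\}_{i\ge N}$ as a $\rho$-cover (legitimate because $f(\diam B_i)\to 0$ forces $\diam B_i\to 0$ via the dimension-function axioms), bound $\H^f_\rho$ by the tail sum, and let $N\to\infty$ then $\rho\to 0$. The paper itself does not supply a proof of this proposition but simply cites \cite[Lemma~3.10]{BeDo_99}, so there is nothing to compare against; your write-up would in fact serve as the omitted proof. Your closing remark about the balls-versus-measurable-sets discrepancy is well observed and correctly resolved.
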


\section{Proof of  Theorem \ref{thm3}} %The proof of Theorem \ref{thm1} splits into two cases: the convergence case  and the divergence~case. 

\subsection{The convergence case} We are given that the series
\begin{equation}\label{star}
\sum\limits_{k=1}^\infty\  \sum\limits_{ j<\log_2\Psi(2^k)} 2^{2k+j}f\left(\frac{2^{-(2k+j)}}{\Psi(2^{k+j})}\right)
\end{equation}
converges. We can assume that $ \Psi(t)\ge 1$ for all $t\gg 1$. Otherwise, $\Psi(t)<1$ for all  large $t$ since we have assumed $\Psi$ to be non-decreasing. Then it is obvious that the set
$$G_1(\Psi)= \left\{x\in[0, 1): a_{n+1}(x)\,>\, \Psi(q_n)  \ \ {\rm for \ i.m.\ }n\in\N\right\}$$ and thus $$G(\Psi)=\Big\{x\in [0,1):  a_{n}(x)a_{n+1}(x)\ge \Psi(q_n) \ \ {\rm for \ i.m.}\ n\in \N\Big\},$$ contains all irrational numbers in $[0,1]$, and that the sum \eqref{star} diverges.   
Since $\Psi$ is increasing and from \eqref{recu} it follows that $q_n\geq a_nq_{n-1}$. We notice some obvious inclusions, 
\begin{align*}
G(\Psi)=& \Big\{x\in [0,1):  a_{n}(x)a_{n+1}(x)\ge \Psi(q_n) \ \ {\rm for \ i.m.}\ n\in \N\Big\}\\ \subseteq& \Big\{x\in [0,1):  a_{n}(x)a_{n+1}(x)\ge \Psi(a_nq_{n-1}) \ \ {\rm for \ i.m.}\ n\in \N\Big\}\\
%  =&\bigcap_{N=1}^{\infty}\bigcup_{n=N}^{\infty}\bigcup_{a_1,\dots,a_n}J_n(a_1,\dots,a_n)\\ 
   \subseteq&\bigcup_{n=N}^{\infty}\bigcup_{a_1,\dots,a_n} \bigcup_{a_{n+1} > \frac{\Psi(a_nq_{n-1})}{a_n}}I_{n+1}(a_1,\dots,a_n,a_{n+1})\\ &
    = \mathcal A_1(\Psi)\cup \mathcal A_2 (\Psi).
\end{align*}
Where \begin{align*}
\mathcal A_1 (\Psi)&=\bigcup_{n=N}^{\infty}\bigcup_{a_1,\dots,a_n}\bigcup_{a_n \leq \Psi(q_{n-1})} \bigcup_{a_{n+1} > \frac{\Psi(a_nq_{n-1})}{a_n}}I_{n+1}(a_1,\dots,a_n,a_{n+1}),\\ 
\mathcal A_2(\Psi)&=\bigcup_{n=N}^{\infty}\bigcup_{a_1,\dots,a_n}\bigcup_{a_n >\Psi(q_{n-1})}  \bigcup_{a_{n+1} > \frac{\Psi(a_nq_{n-1})}{a_n}}I_{n+1}(a_1,\dots,a_n,a_{n+1}).
\end{align*}
\subsubsection{Covering for $\mathcal A_1(\Psi)$}

To estimate the Hausdorff measure of the set $\mathcal A_1(\Psi)$, we proceed as follows.  Let
$$J_n(a_1,\dots,a_n):=\bigcup_{a_{n+1} > \frac{\Psi(a_nq_{n-1})}{a_n}}I_{n+1}(a_1,\dots,a_n,a_{n+1}).$$
Using (P$_1$) in Proposition \ref{pp3} and the recursive relation \eqref{recu},
% on $q_n$,
the diameter of $J_n(a_1,\dots,a_n)$ can be bounded {as follows}:
 \begin{align*}
|J_n(a_1,\dots,a_n)|&=\sum_{a_{n+1} > \frac{\Psi(a_nq_{n-1})}{a_n}}\left|\frac{a_{n+1}p_n+p_{n-1}}{a_{n+1}q_n+q_{n-1}}-\frac{(a_{n+1}+1)p_n+p_{n-1}}{(a_{n+1}+1)q_n+q_{n-1}}\right|\\
&\le  \left|\frac{\frac{\Psi(a_nq_{n-1})}{a_n}p_n+p_{n-1}}{\frac{\Psi(q_n)}{a_n}q_n+q_{n-1}}-\frac{p_n}{q_n}\right|=
\frac{1}{\left(\frac{\Psi(a_nq_{n-1})}{a_n}q_n+q_{n-1}\right)q_n}\\
&\le \frac{1}{\Psi(a_nq_{n-1})a_nq_{n-1}^2}.\end{align*}

Let $Q>1$ and $Q<q_{n-1}\leq 2Q$. Then we can bound the diameter of  $J_n(a_1,\dots,a_n)$ as
\begin{equation*}
|J_n(a_1,\dots,a_n)|\ll  \frac{1}{\Psi(a_nQ)a_nQ^2}.
\end{equation*} 

Hence, the cost of the cover when $a_{n}<\Psi(q_{n-1})$,  is \[\sum_{a=1}^{\Psi(Q)} f \left(\frac{1}{aQ^2\Psi(aQ)}\right). \] In the case  $a_{n}>\Psi(q_{n-1})$,  the cost of the cover is given by \[ f \left(\frac{1}{Q^2 \Psi(Q)}\right).\]

%\commumtaz{the cost of the cover when $a_{n+1}>a_n$ should be corrected to $a_n<\Psi(q_{n-1})$, similarly when $a_{n+1}=a_n$ it should be corrected to $a_n\geq \Psi(q_{n-1})$.}
%
%\commumtaz{I am looking for an efficient way of transitioning from the continued fractions expressions to the classical expressions below}
%
%
%
%$$\mathcal A_1=\bigcup_{a=1}^{\lceil\Psi(Q)\rceil}\Big\{x\in [0,1): b\ge \frac{\Psi(Qa)}{a}  \ \ {\rm for \ i.m.}\ Q\in \N\Big\}$$
%
%
% Where $\omega\in E^*:$ the space of finite words. For each such interval, one of the sets contributing to the limsup set is
%
%\[\bigcup \{u_{\omega a b}([0,1]) : a b > \Psi(Q a)\}.\]
%%\commumtaz{Divide $\{x: a_n a_{n+1}> \Psi(q_{n-1} a_n)\}$  into two parts according to $a_n<\Psi(q_{n-1})$ or $a_n\ge \Psi(q_{n-1}).$ The latter corresponds to a subset of Jarnik set. Since the Jarnik set is obviously a subset of $G(\Psi),$ we need only pay attention to the first one.}
%
%
%
%
%The most efficient covering of this set is by the sets
%
%\[\{u_{\omega a}([0,a/\Psi(Q a)]), a = 1,...,\lceil\Psi(Q)\rceil\}\]
%
%as well as the set obtained when $a=b$, i.e. $u_\omega([0,1/\Psi(Q)]).$
%
%
%On the other hand note that the sets expressed in the continued fraction expansion form take the classical form as
%
%
%$$\Big\{x\in [0,1): |x-\frac{p}{q}|< \frac{1}{(Qa)^2 \Psi(Qa)/a}  \ \ {\rm for \ i.m.}\ Q\in \N\Big\}$$
%
%Hence the cost of this collection is
%
%\[\sum_{a=1}^{\Psi(Q)} f [ 1/(Q^2 a^2) (a/\Psi(Q a)) ] + f [ 1/(Q^2 \Psi(Q)) ].\]
%

Since  $Q>1$, it follows from equation \eqref{lencyl} that for each window $[Q,2Q]$, there are at most $Q^2$ cylinders $I_n$ of length comparable (up to a constant) to $~Q^{-2}$.
% \comwang{Not too sure why this is true.  Fix $Q$, it is possible that the length is not comparable to $Q^{-2}$} 
 Multiplying the cost of the cover given above by $Q^2$ which are the number of such intervals, and then summing over all the windows $Q=~2^k,$ we have

\[\sum_{Q=2^k; k\geq 1} Q^2\sum_{a=1}^{\Psi(Q)} f \left(\frac{1}{aQ^2\Psi(aQ)}\right) + \sum_{Q=2^k; k\geq 1} Q^2f \left(\frac{1}{Q^2 \Psi(Q)}\right).\]

Applying Cauchy's condensation test on the second term, and rewriting the first term gives the total cost as

\[\sum_{\substack{k\geq 1\\ Q=2^k }}  \  \sum_{ \substack{j\geq 1, A=2^j \\ A<\Psi(Q)} } Q^2 A f \left( \frac1{Q^2 A \Psi(Q A)}\right)+ \sum_q q f \left(\frac{1}{q^2 \Psi(q)}\right).\]

%Therefore, $$
%\mathcal{H}^f\big(G(\Psi)\big)\le \liminf_{k\to \infty}\sum_{Q=2^k; k\geq 1} Q^2\sum_{a=1}^{\Psi(Q)} f \left(\frac{1}{aQ^2\Psi(aQ)}\right) + \liminf_{q\to\infty}\sum_q q f \left(\frac{1}{q^2 \Psi(q)}\right) $$
%
%
The second term is clearly smaller than the first term  so we can ignore it. 
Thus, using Proposition \ref{bclem}, we have

$$\H^f (\mathcal A_1(\Psi)) =0 \quad \text{if}\quad \sum\limits_{k=1}^\infty\  \sum\limits_{ j<\log_2\Psi(2^k)} 2^{2k+j}f\left(\frac{2^{-(2k+j)}}{\Psi(2^{k+j})}\right)<\infty.$$

\subsubsection{Covering for $\mathcal A_2(\Psi)$}
Next we notice  that the set $\mathcal A_2(\Psi)$ is a subset of the Jarn\'ik set, that is, it is contained in the set $G_1(\Psi)$. To see this note that if $a_n \geq\Psi(q_{n-1})$ then it follows that 
$a_{n+1} \geq\Psi(q_{n-1})$ for infinitely many $n$.  This in turn implies that for any dimension function $f$
$$\H^f (\mathcal A_2(\Psi)) \leq \H^f ( G_1(\Psi))\leq \H^f (\mathcal K(\Psi))=0 \iff  \sum_{q}qf\left(\frac1{q^2\Psi(q)}\right)<\infty.$$

However notice that the series above is smaller than the one we are claiming in our theorem.  Hence combining both the coverings for $\mathcal A_1(\Psi)$ and $\mathcal A_2(\Psi)$, and by using Proposition \ref{bclem}, we conclude that   
 \[\H^f(G(\Psi))=0 \quad \text{if}\quad  \sum\limits_{k=1}^\infty\  \sum\limits_{ j<\log_2\Psi(2^k)} 2^{2k+j}f\left(\frac{2^{-(2k+j)}}{\Psi(2^{k+j})}\right)<\infty.\]

\subsection{The divergence case}
For the divergence case, we appeal to a recent criterion introduced by Hussain-Simmons \cite{HussainSimmons2}. We briefly state this criterion below and then use it to prove the divergence case in the subsequent subsection.

\subsubsection{A generalised Hausdorff measure criterion} Let $X$ be a metric space.   For $\delta>0$, a measure $\mu$ is {\em Ahlfors $\delta$-regular} if and only if there exist positive constants $0<c_1<1< c_2<\infty$ and $r_0 > 0$ such that the inequality 
$$c_1r^\delta\leq \mu (B(x,r)) \leq c_2 r^\delta$$ 
holds for every ball $B:=B(x, r)$ in $X$ of radius $r \leq r_0$ centred at $x \in \Supp(\mu)$, where $\Supp(\mu)$ denotes the topological support of $\mu$. The space $X$ is called Ahlfors $\delta$-regular if there is an Ahlfors $\delta$-regular measure whose support is equal to $X$. If $X$ is Ahlfors $\delta$-regular, then so is the $\HH^\delta$ measure restricted to $X$ i.e.  $\HH^\delta\given X$.   %For real quantities $A,B$ that depend on parameters, we write $A \lesssim B$ if $A \leq c B$ for a constant $c > 0$ that is independent of those parameters. We write  $A\asymp B$ if $A\lesssim B\lesssim A$.

%\section{General principle for Hausdorff measure}

\begin{thm}[Hussain-Simmons, \cite{HussainSimmons2}]\label{mainthm}
Fix $\delta > 0$, let $(B_i)_i$ be a sequence of open sets in an Ahlfors $\delta$-regular metric space $X$, and let $f$ be a dimension function such that
 $r^{-\delta} f(r) \to \infty$  as $r \to 0$ and
 \begin{equation}\label{ff3}
 \exists\, \kappa \ge 1  \text{ such that } \frac{f(x_2)}{x_2^\delta}\le \kappa\frac{f(x_1)}{x_1^\delta}  {\text{ whenever}}\ x_1< x_2\ll 1.
\end{equation}
Fix $C > 0$, and suppose that the following hypothesis holds:
\begin{itemize}
\item[(*)] For every ball $B_0 \subset X$ and for every $N\in\N$, there exists a probability measure $\mu = \mu(B_0,N)$ with $\Supp(\mu) \subset \bigcup_{i\geq N} B_i\cap B_0$, such that for every ball $B = B(x,\rho) \subset X$, we have
\begin{equation}
\label{muBbound}
\mu(B) \lesssim \max\left(\left(\frac{\rho}{\diam B_0}\right)^\delta,\frac{f(\rho)}{C}\right).
\end{equation}
\end{itemize}
Then for every ball $B_0$,
\[
\HH^f\left(B_0 \cap \limsup_{i\to\infty} B_i\right) \gtrsim C.
\]
In particular, if the hypothesis \text{(*)} holds for all $C$, then
\[
\HH^f\left(B_0\cap \limsup_{i\to\infty} B_i\right) = \infty.
\]
\end{thm}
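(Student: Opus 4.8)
The plan is to deduce the statement from the \emph{mass distribution principle} — the lower-bound counterpart of Proposition \ref{bclem}: if $\mu$ is a finite measure with $\mu(E)>0$ and there is a constant $c$ with $\mu(U)\le c\,f(\diam U)$ for every set $U$ of sufficiently small diameter, then $\HH^f(E)\ge \mu(E)/c$. Indeed, for any cover $\{U_j\}$ of $E$ by small sets one has $\mu(E)\le\sum_j\mu(U_j)\le c\sum_j f(\diam U_j)$, and taking the infimum over covers and letting the mesh tend to $0$ gives the bound. Thus everything reduces to producing, for the fixed $C$, a \emph{probability} measure $\mu$ supported in $E:=B_0\cap\limsup_{i\to\infty}B_i$ whose ball masses satisfy $\mu(B(x,\rho))\lesssim f(\rho)/C$ for all small $\rho$, with an implied constant independent of $C$ so that the concluding ``for all $C$'' assertion follows by letting $C\to\infty$.

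First I would dispose of the maximum in \eqref{muBbound}. Since $r^{-\delta}f(r)\to\infty$ as $r\to0$, for every fixed $C$ and $B_0$ there is a threshold $\rho_0=\rho_0(C,\diam B_0,f)$ such that $(\rho/\diam B_0)^\delta\le f(\rho)/C$ whenever $\rho\le\rho_0$; equivalently $f(\rho)/\rho^\delta\ge C(\diam B_0)^{-\delta}$ eventually holds. Consequently, at all scales $\rho\le\rho_0$ the bound \eqref{muBbound} reads simply $\mu(B(x,\rho))\lesssim f(\rho)/C$, which is exactly the input the mass distribution principle requires, since Hausdorff measure is computed from covers of vanishing mesh and only small scales matter. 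The Ahlfors term $(\rho/\diam B_0)^\delta$ therefore plays no role in the final measure-theoretic estimate; its sole purpose is to regulate the measure at coarse scales during the construction.

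The heart of the matter is that a single application of hypothesis (*) only yields a measure supported in the thickened tail $\bigcup_{i\ge N}B_i\cap B_0$, and a plain weak-$\ast$ limit of such measures is supported in the \emph{topological} limsup $\bigcap_N\overline{\bigcup_{i\ge N}B_i}$, which may strictly contain the genuine limsup $\limsup_i B_i$; a lower bound on $\HH^f$ of the former says nothing about the latter. To force the support into the true limsup I would exploit the fact that (*) is assumed for \emph{every} ball, not just for $B_0$, and run a Cantor-type iteration. Starting from $\mu(B_0,N_1)$, cover its support by finitely many small balls, apply (*) inside each such ball with a tail index $N_2>N_1$, and iterate with $N_k\to\infty$, at each generation redistributing mass according to the measures furnished by (*). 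This yields a nested sequence of compact supports $K_1\supseteq K_2\supseteq\cdots$ with $K_k\subset\bigcup_{i\ge N_k}B_i$, so that the limit measure $\mu$ is supported in $\bigcap_k\bigcup_{i\ge N_k}B_i\subseteq\limsup_i B_i$ and in $B_0$; hence $\Supp\mu\subseteq E$. The ball bound is then propagated to $\mu$ by a scale-by-scale check: for a ball $B(x,\rho)$ one locates the generation whose pieces have diameter comparable to $\rho$, bounds the local mass by the coarse term $(\rho/\diam)^\delta$ there and by $f/C$ below it, and glues the estimates using the quasi-monotonicity \eqref{ff3} (so that $f$ at a parent scale controls the sum of $f$ over the children) together with Ahlfors $\delta$-regularity of $X$ (which controls the number and sizes of the children).

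I expect the main obstacle to be precisely this inductive measure construction: arranging the generations so that the supports nest inside ever-higher tails of $(B_i)$ while keeping \eqref{muBbound} uniform in the generation and in $N$, and correctly reconciling the two regimes of the maximum across scales. Conditions \eqref{ff3} and $r^{-\delta}f(r)\to\infty$ are exactly what make the inductive estimate close: the former supplies the cross-scale comparability of $f$ needed when summing child contributions, and the latter guarantees that below $\rho_0$ the $f(\rho)/C$ term dominates, so the limit measure obeys $\mu(B(x,\rho))\lesssim f(\rho)/C$. Once $\mu$ is in hand the mass distribution principle gives $\HH^f(E)\ge c_0\,C$ with $c_0$ independent of $C$, and letting $C\to\infty$ yields $\HH^f(E)=\infty$ under the stronger hypothesis.
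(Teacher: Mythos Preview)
This theorem is not proved in the present paper; it is quoted from \cite{HussainSimmons2} and invoked as a black box in the divergence half of Theorem~\ref{thm3}. The only remark the paper makes about its proof is that the original hypothesis ``$r^{-\delta}f(r)$ is decreasing'' may be relaxed to the quasi-monotonicity \eqref{ff3}, because that hypothesis is used only at the last step of the argument in \cite{HussainSimmons2}. There is therefore no in-paper proof against which to compare your proposal.

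That said, your outline matches the standard architecture for such results and is consistent with the paper's remark. The mass distribution principle reduces the problem to building a probability measure on $B_0\cap\limsup_i B_i$ with the Frostman bound $\mu(B(x,\rho))\lesssim f(\rho)/C$; the Cantor-type iteration of hypothesis~(*) over nested balls with tail indices $N_k\to\infty$ is precisely the device that forces the support into the genuine $\limsup$ rather than its closure; and your identification of where \eqref{ff3} enters---combining a parent's mass $\lesssim f(\diam B')/C$ with the child's local Ahlfors bound $(\rho/\diam B')^\delta$ to obtain $\lesssim f(\rho)/C$---is exactly the ``last step'' at which the paper says quasi-monotonicity suffices. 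The one technical point your sketch leaves implicit is why the implied constant does not pick up a factor of $\kappa$ at every generation: for a test ball $B(x,\rho)$ the comparison \eqref{ff3} is needed only once, at the unique generation whose scale straddles $\rho$, while at all coarser generations the mass is tracked via the Ahlfors term $(\rho/\diam)^\delta$, which telescopes.
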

Note that, originally Theorem \ref{mainthm} was proved with the condition \eqref{ff3} for $\kappa=1$ only, that is,  ``$r\mapsto r^{-\delta}f(r)$ is decreasing'' was assumed. However, this monotonicity condition can be replaced with the ``quasi-monotonicity'' condition as in \eqref{ff3}. Within the proof of their theorem, the condition was only used on the last step,  which is still valid with the assumption \eqref{ff3}.

The condition ``$r^{-\delta} f(r) \to \infty$  as $r \to 0$'' is a natural condition which implies that $\HH^f(B)=\infty$. The hypothesis (*) is the  main ingredient of this theorem and, roughly speaking, this gives a systematic way of constructing the probability measure on the limsup set. %In contrast,  however, the mass distribution principle (Lemma 1.2) assumes that there is a mass distributed on the limsup set. %which then gives the dimension bound. \commumtaz{maybe this needs re-editing}
%The hypothesis essentially translates to saying that for any given ball $B_0$ with a probability measure $\mu$, we can find a bunch of smaller balls $B_i$'s which are inside the approximation $\bigcup_{i\geq N} B_i\cap B_0$ to the limsup set $\limsup_{i\to \infty} B_i$ with a mass assigned to each of them. Basically we need to assume that we can, for each ball $B_0$, redistribute the mass on smaller balls $B_i$ in a way that whenever we draw a test ball $B$ then the measure of this ball can always be bounded by either the number of full balls inside it  i.e. $f(\rho)/C$ or  by the measure of the fractions of balls $\left(\frac{\rho}{\diam B_0}\right)^\delta$.

%\commumtaz{The condition \eqref{f1} can be replaced with quasi monotonicity, but what advantage it would give us?  More cautiously it can be replaced with something that excludes the identity dimension functions $x^{-1}f(x)$}

\subsubsection{Proof of the divergence case} Now we are in a position to prove the divergence case. We will prove, in particular, the following generalised form from which the divergence case of Theorem \ref{thm1} readily follows.

For each $a\in E$, let $u_a:[0, 1]\to [0, 1]$ defined by $u_a(x)=\frac1{a+x}$. The collection of maps $u=(u_a)_{a\in E}$ is a Gauss Iterated Function System (GIFS)  if:
\begin{itemize}
\item $E\subseteq \N$ is a (finite or infinite) index set, which is referred to as an alphabet;
\item $ X\subseteq\R$ is a nonempty compact set which is equal to the closure of its interior;
\item for all $a\in E, u_a(X)\subset X$.
\end{itemize}

\begin{thm}\label{thm2}
Let $(u_a)_{a\in E}$ be the Gauss iterated function system. For each finite word $\omega \in E^*$ and $a \leq \Psi(Q_\omega)$ let
\[
S_{\omega a} = u_{\omega a}([0,a/\Psi(Q_\omega a)]).
\]
Let $f$ be a dimension function such that $\sum_{\omega,a} f(\diam S_{\omega a})$ diverges. Then
\[
\HH^f\left(\limsup_{\omega,a} S_{\omega a}\right) = \infty.
\]
\end{thm}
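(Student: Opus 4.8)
The plan is to deduce Theorem~\ref{thm2} from the Hussain--Simmons criterion, Theorem~\ref{mainthm}, applied with $\delta=1$ on the ambient space $X=[0,1]$ (which is Ahlfors $1$-regular for Lebesgue measure). The sets $B_i$ will be the balls $S_{\omega a}$, reindexed by a single parameter that runs through all pairs $(\omega,a)$ in some fixed order compatible with the word length $|\omega|$, so that ``$i\to\infty$'' forces $|\omega|\to\infty$. The hypotheses on $f$ required by Theorem~\ref{mainthm} — namely $r^{-1}f(r)\to\infty$ and the quasi-monotonicity \eqref{ff3} with $\delta=1$ — are exactly \eqref{f2infty} and \eqref{ff2}, which we are assuming throughout; and the divergence hypothesis $\sum_{\omega,a} f(\diam S_{\omega a})=\infty$ is given. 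So everything reduces to verifying hypothesis (*): for every ball $B_0\subset[0,1]$ and every threshold $N$, one must build a probability measure $\mu$ supported inside $\bigcup_{i\ge N} S_{\omega a}\cap B_0$ satisfying the two-sided bound \eqref{muBbound}.

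The key step is the construction of $\mu$, which I would carry out as a Cantor-type mass distribution along the tree of admissible words. Starting from a cylinder $I_{\omega_0}$ contained in $B_0$ with $|\omega_0|$ large enough to clear the level $N$, one recursively distributes mass: at a word $\omega$ already retained, the children are the sets $S_{\omega a}$ for $a\le\Psi(Q_\omega)$, and within $S_{\omega a}=u_{\omega a}([0,a/\Psi(Q_\omega a)])$ one continues by choosing further sub-cylinders $I_{\omega a b}$ that lie inside $S_{\omega a}$. The mass given to each piece is split in proportion to its $f$-value relative to the total $f$-mass of the available children (the standard ``normalised $\sum f(\diam)$'' weighting), and one iterates so that the construction eventually reaches arbitrarily fine scales, guaranteeing $\Supp(\mu)\subseteq\limsup S_{\omega a}$. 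The diameter estimates needed are routine consequences of Proposition~\ref{pp3}: $\diam I_\omega\asymp Q_\omega^{-2}$, $\diam u_{\omega a}(J)\asymp Q_{\omega a}^{-2}\,|J|$, so $\diam S_{\omega a}\asymp Q_{\omega a}^{-2}\cdot a/\Psi(Q_\omega a)\asymp (a Q_\omega^2\Psi(Q_\omega a))^{-1}$, and the bounded-distortion property of the Gauss maps controls ratios of cylinder lengths by absolute constants.

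The main obstacle, and the technical heart of the proof, is checking the upper bound \eqref{muBbound} for an \emph{arbitrary} ball $B=B(x,\rho)$, not just for cylinders in the construction tree. The argument splits by scale. If $\rho$ is larger than the diameter of the cylinder at the generation where $B$ ``resolves'' the tree, then $B$ meets boundedly many top-level pieces and one estimates $\mu(B)$ by comparing $f(\rho)$ against the $f$-sum of those pieces, using quasi-monotonicity \eqref{ff2}/\eqref{ff3} to pass between $f(\rho)$ and $f$ of the pieces; here the normalisation of the measure against $\sum f(\diam S_{\omega a})$ is what produces the factor $C$ in the denominator — one must run the construction at a ``density level'' tuned to $C$, exactly as in the Hussain--Simmons scheme, so that (*) holds for all $C$ and hence the measure is infinite. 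If $\rho$ is smaller, $B$ is contained (up to bounded overlap) in a single child $S_{\omega a}$, and one recurses, the base case being that inside $S_{\omega a}$ the measure is, after rescaling by $u_{\omega a}$, a copy of the same kind of construction on $[0,a/\Psi(Q_\omega a)]$; this self-similarity is what makes the induction close. One subtlety to handle carefully is the two regimes for $a$ (the $a\le\Psi(Q_\omega)$ constraint versus $a$ comparable to $\Psi(Q_\omega)$), which is the GIFS-analogue of the $\mathcal A_1$ versus $\mathcal A_2$ split in the convergence case; but since here we only need a \emph{lower} bound on Hausdorff measure it suffices to run the whole construction inside the $\mathcal A_1$-type range $a\le\Psi(Q_\omega)$, where $\diam S_{\omega a}$ has the clean form above, and the divergence of $\sum f(\diam S_{\omega a})$ over that range is what we invoke. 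Once (*) is verified for all $C$, Theorem~\ref{mainthm} gives $\HH^f(B_0\cap\limsup S_{\omega a})=\infty$ for every $B_0$, which is the claim.
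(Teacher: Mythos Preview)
You correctly reduce everything to verifying hypothesis~(*) of Theorem~\ref{mainthm} with $\delta=1$ on $[0,1]$, but your construction of $\mu$ is both different from the paper's and carries a real gap. First, reread~(*): $\Supp(\mu)$ need only lie in $\bigcup_{i\ge N}B_i\cap B_0$, \emph{not} in $\limsup_i B_i$. The paper exploits this and never recurses. It groups the pairs $(\omega,a)$ into dyadic blocks $A_{k,\ell}=\{2^k\le Q_\omega<2^{k+1},\ 2^\ell\le a<2^{\ell+1}\}$, on each of which $\diam S_{\omega a}\asymp\rho_{k,\ell}$ is essentially constant; sets $A'_{k,\ell}=\{(\omega,a)\in A_{k,\ell}:S_{\omega a}\subset B_0\}$; proves the counting claim $\#(A'_{k,\ell})\gtrsim|B_0|\cdot\#(A_{k,\ell})$; and then, choosing $N_1$ so that $\Omega=\sum_{N_0\le k,\ell\le N_1}\#(A'_{k,\ell})f(\rho_{k,\ell})\ge C$, takes
\[
\mu=\frac1\Omega\sum_{N_0\le k,\ell\le N_1}\ \sum_{(\omega,a)\in A'_{k,\ell}}f(\rho_{k,\ell})\,\lambda_{S_{\omega a}},
\]
a \emph{finite} weighted sum of normalised Lebesgue pieces. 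The bound \eqref{muBbound} is then checked via the cylinder dichotomy that any $S_{\omega a}$ meeting a test set $B=u_\tau([1/b_1,1/b_2])$ satisfies either $S_{\omega a}\subset B$ or $B\subset S_{\omega a}$: in the first case one repeats the counting claim with $B$ in place of $B_0$ to get $\mu(B)\lesssim |B|/|B_0|$, and in the second case a single $S_{\omega a}$ contributes, giving $\mu(B)\le\Omega^{-1}f(\rho_{k,\ell})\lambda_{S_{\omega a}}(B)\lesssim f(\diam B)/C$. An arbitrary ball is then sandwiched between one or two such sets of comparable diameter.

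Your recursive Cantor scheme, with mass at each node split proportionally to the children's $f$-values, needs the local sums $\sum_{a\le\Psi(Q_\omega)}f(\diam S_{\omega a})$ (and likewise the sums over the sub-cylinders $I_{\omega a b}\subset S_{\omega a}$) to be bounded below against the parent's $f$-value at \emph{every} node encountered; otherwise proportional weighting concentrates mass and \eqref{muBbound} fails at the intermediate scales. The global divergence $\sum_{\omega,a}f(\diam S_{\omega a})=\infty$ does not provide this node-by-node control, and ``run the construction at a density level tuned to $C$'' does not explain how to obtain it. The paper's flat construction sidesteps the issue entirely by drawing all of its pieces from one large dyadic window, so no inductive comparison between successive generations is ever needed; the counting claim, which you do not mention, is what replaces it.
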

 To apply theorem \ref{thm2} to derive the Hausdorff measure of the set $G(\Psi)$, notice that the set $S_{\omega a}$ corresponds to the set $$\bigcup_{a_n \leq \Psi(q_{n-1})} \bigcup_{a_{n+1} > \frac{\Psi(a_nq_{n-1})}{a_n}}I_{n+1}(a_1,\dots,a_n,a_{n+1}).$$ Hence $$\limsup_{\omega,a} S_{\omega a}\subseteq G(\Psi).$$ 
   As in the convergence case for the set $\mathcal A_1(\Psi)$, let $Q>1$ and $Q<q_{n-1}\leq 2Q$. Then  we have  
 \begin{align*}\infty =\sum_{\omega,a} f(\diam S_{\omega a}) &\asymp \sum_{Q=2^k; k\geq 1}Q^2 \sum_{a=1}^{\Psi(Q)} f \left(\frac{1}{aQ^2\Psi(aQ)}\right)\\ &\asymp \sum\limits_{k=1}^\infty\  \sum\limits_{ j<\log_2\Psi(2^k)} 2^{2k+j}f\left(\frac{2^{-(2k+j)}}{\Psi(2^{k+j})}\right).\end{align*}

\begin{proof}[Proof of Theorem \ref{thm2}]
Fix $B_0 \subset [0,1]$ and $N\in\N$, and we will construct the measure $\mu = \mu(B_0,N)$ such that the hypothesis (*) in Theorem \ref{mainthm} holds. 

For each $k,\ell\in \N$ let
\[
A_{k,\ell} = \{(\omega,a) : 2^k \leq Q_\omega < 2^{k+1}, \;\; 2^\ell \leq a < 2^{\ell + 1}\}.
\]
Then for all $(\omega,a)\in A_{k,\ell}$, we have $\diam(S_{\omega a}) \asymp \rho_{k,\ell}$. Thus
\[
\sum_{k,\ell} \#(A_{k,\ell}) f(\rho_{k,\ell}) = \infty.
\]

%Next we count all the $(\omega,a) $ in $ A_{k,\ell}$ such that $S_{\omega a} \subset B_0$. 
Let $$A_{k,\ell}' = \{(\omega,a) \in A_{k,\ell} : S_{\omega a} \subset B_0\}.$$

\begin{claim}
\[
\#(A_{k,\ell}') \gtrsim \#(A_{k,\ell}) |B_0|
\]
for all $k,\ell \geq N_0$, for some sufficiently large $N_0$.
\end{claim}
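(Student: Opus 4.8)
The plan is to estimate the number of pairs $(\omega,a) \in A_{k,\ell}$ for which the set $S_{\omega a}$ is \emph{not} contained in $B_0$, and show that this is a small proportion of $\#(A_{k,\ell})$ once $|B_0|$ is bounded below and $k,\ell$ are large. First I would observe that the sets $\{S_{\omega a} : (\omega,a)\in A_{k,\ell}\}$ have diameters all comparable to $\rho_{k,\ell} = 2^{-(2k+\ell)}/\Psi(2^{k})$ (up to absolute constants coming from the continued-fraction estimate $(\mathrm P_1)$ and the definition of $S_{\omega a}$), and that, for fixed $k$, these sets have \emph{bounded overlap} and are contained in a bounded number of cylinders of generation determined by $Q_\omega \asymp 2^k$; in particular, in the interval $[0,1]$ they are spread out so that any subinterval of length $|B_0|$ meets at least a proportion $\gtrsim |B_0|$ of them. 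Concretely, I would cover $[0,1]$ by $\asymp |B_0|^{-1}$ intervals of length $\asymp |B_0|$, note that each $S_{\omega a}$ with $(\omega,a) \in A_{k,\ell}$ lies in one or two of these (since $\diam S_{\omega a} \ll |B_0|$ once $k \geq N_0$), and conclude that some such interval — in fact $B_0$ itself by a translation/pigeonhole argument — contains at least $\gtrsim \#(A_{k,\ell})\,|B_0|$ of the $S_{\omega a}$.

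To make the pigeonhole step honest I would use a quantitative regularity statement for how the $S_{\omega a}$ distribute: for each fixed generation $n$ (equivalently each fixed $k$), the parent cylinders $I_n(a_1,\dots,a_n)$ with $2^k \le q_{n-1} < 2^{k+1}$ partition a fixed-proportion subset of $[0,1]$, and within each such parent the sub-collection $\{S_{\omega a} : 2^\ell \le a < 2^{\ell+1}\}$ occupies a fixed proportion of the parent's length with the individual pieces much shorter than the parent (again once $k,\ell$ are large). Hence the union $\bigcup_{(\omega,a)\in A_{k,\ell}} S_{\omega a}$ has Lebesgue measure $\asymp \#(A_{k,\ell})\,\rho_{k,\ell}$, and it is \emph{uniformly distributed} at scale $|B_0|$ in the sense that its intersection with any interval of length $|B_0|$ has measure $\gtrsim \#(A_{k,\ell})\,\rho_{k,\ell}\,|B_0|$. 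Since each $S_{\omega a}$ contributes at most $\asymp \rho_{k,\ell}$ to this, we get at least $\gtrsim \#(A_{k,\ell})\,|B_0|$ of them meeting $B_0$; shrinking $B_0$ slightly (or using $\diam S_{\omega a} \ll |B_0|$) upgrades "meeting $B_0$" to "contained in $B_0$", giving the claim.

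I expect the main obstacle to be the uniform distribution / non-concentration step: one must rule out the possibility that, for some bad $(k,\ell)$, an unusually large fraction of the pairs in $A_{k,\ell}$ pile up in a tiny part of $[0,1]$ (for instance because most admissible words $\omega$ of a given $Q_\omega$-size share a long common prefix). This is controlled by the bounded-distortion property of the Gauss maps $u_a$ together with the fact that the number of words $\omega$ with $Q_\omega \asymp 2^k$ is comparable across all the cylinders of the relevant generations — i.e.\ a counting lemma for $\#\{\omega : Q_\omega \asymp 2^k,\ I_{|\omega|}(\omega)\subset I\}$ being $\asymp \#\{\omega : Q_\omega\asymp 2^k\}\cdot|I|$ for intervals $I$ not too short. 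Once that counting input is in hand the rest is a routine pigeonhole and the choice of $N_0$ depending on $|B_0|$ (we need $\rho_{N_0,\ell} \ll |B_0|$, which holds since $\rho_{k,\ell}\to 0$); I would flag that $N_0$, and all implied constants in $\gtrsim$, are allowed to depend on $B_0$, which is harmless for the application of hypothesis (*) in Theorem \ref{mainthm}.
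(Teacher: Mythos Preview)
Your second-paragraph strategy is essentially the paper's approach: reduce to the counting lemma $\#\{\omega : Q_\omega\asymp 2^k,\ S_\omega\subset B_0\}\asymp |B_0|\cdot\#\{\omega : Q_\omega\asymp 2^k\}$ for $k$ large, and then pick up the free factor $2^\ell$ from the choice of $a\in[2^\ell,2^{\ell+1})$ (since $S_{\omega a}\subset S_\omega\subset B_0$). That is exactly what the paper does.

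Two remarks. First, the pigeonhole argument in your opening paragraph is not valid as stated: pigeonhole only shows that \emph{some} interval of length $|B_0|$ receives its fair share of the $S_{\omega a}$, not that the \emph{given} ball $B_0$ does, and there is no ``translation'' step available because $B_0$ is fixed by hypothesis~(*). You rightly abandon this for the uniform-distribution approach, so this is not fatal, but it should be removed.

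Second, where you invoke ``bounded distortion plus a counting lemma'' abstractly, the paper supplies a concrete mechanism that you may find cleaner: write each $\omega$ with $2^k\le Q_\omega<2^{k+1}$ uniquely as $\omega=\tau b$ with $Q_\tau<2^k\le Q_{\tau b}$. For fixed $\tau$ there are $\asymp 2^k/Q_\tau$ admissible $b$, each cylinder $u_{\tau b}([0,1])$ has length $\asymp 2^{-k}Q_\tau^{-1}$, and the unions $\bigcup_{b:\,2^k\le Q_{\tau b}} u_{\tau b}([0,1])$ over varying $\tau$ form a disjoint cover of $[0,1]$. Intersecting this partition with $B_0$ gives the count directly. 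This one-step prefix decomposition is precisely the ``counting input'' you said you needed, and it avoids having to formalise the distortion/regularity statement separately.
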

\begin{proof}
Indeed, consider the set
\[
\left\{\tau b : Q_\tau < 2^k \leq Q_{\tau b}\right\}.
\]
Fix $\tau$ such that $Q_\tau < 2^k$. Since $Q_{\tau b} = b Q_\tau + Q_{\tau'}$, where $\tau'$ is $\tau$ minus its last symbol, it follows that there are approximately $2^k / Q_\tau$ values of $b$ such that $2^k \leq Q_{\tau b} < 2^{k + 1}$. On the other hand, the set
\begin{equation}
\label{disjoint}
\bigcup_{b: 2^k \leq Q_{\tau b}} u_{\tau b}([0,1])
\end{equation}
has measure approximately $(2^k / Q_\tau)^{-1} Q_\tau^{-2} = 2^{-k} / Q_\tau$. Since the sets \eqref{disjoint} form a disjoint cover of $[0,1]$, it follows that
\begin{align*}
\#(A_{k,\ell}') &\geq 2^\ell \#\{\omega: 2^k \leq Q_\omega < 2^{k+1}, \; S_\omega\cap B_0 \neq \emptyset\}
\\ &\geq 2^\ell \sum_{\substack{\tau \\ Q_\tau < 2^k \\[1ex] S_\tau\cap B_0 \neq \emptyset}} \left|\bigcup_{b: 2^k \leq Q_{\tau b}} u_{\tau b}([0,1])\right|\\[1ex]
&\asymp 2^\ell |B_0| \#\left\{\tau : Q_\tau < 2^k, \; S_\tau\cap B_0 \neq \emptyset\right\}\\
&\asymp \#(A_{k,\ell}) |B_0|.
\end{align*}
{\renewcommand{\qedsymbol}{$\triangleleft$}}\end{proof}

It follows that
\[
\sum_{k,\ell \geq N_0} \#(A_{k,\ell}') f(\rho_{k,\ell}) = \infty.
\]
Fix $N_1$ such that
\[
\Omega = \sum_{N_0 \leq k,\ell \leq N_1} \#(A_{k,\ell}') f(\rho_{k,\ell}) \geq C
\]
and define the measure $\mu$ as follows:
\[
\mu = \frac1\Omega \sum_{N_0 \leq k,\ell\leq N_1} \sum_{(\omega,a) \in A_{k,\ell}'} f(\rho_{k,\ell}) \lambda_{S_{\omega a}},
\]
where $\lambda_A$ is normalized Lebesgue measure on a set $A$.

Let $B = u_\tau([1/b_1,1/b_2])$ for some $\tau,b_1,b_2$ (possibly $b_2 = 1$). Next we estimate $\mu(B)$ and show that it satisfies \eqref{muBbound}. Let $$A_{k,\ell}'' = \{(\omega,a) \in A_{k,\ell} : S_{\omega a} \subset B\}.$$ Then clearly
\begin{align*}
\#(A_{k,\ell}'') &\lesssim \#(A_{k,\ell}) |B|  \asymp \#(A_{k,\ell}') \frac{|B|}{|B_0|}
\end{align*}
and thus
\begin{align*}
\frac1\Omega \sum_{N_0 \leq k,\ell\leq N_1} \sum_{(\omega,a) \in A_{k,\ell}''} f(\rho_{k,\ell}) \lambda_{S_{\omega a}}(B)
&\lesssim \frac1\Omega \sum_{N_0 \leq k,\ell\leq N_1} \#(A_{k,\ell}') \frac{|B|}{|B_0|} f(\rho_{k,\ell})
= \frac{|B|}{|B_0|}.
\end{align*}
Now for all $(\omega,a)$ such that $S_{\omega a}\cap B \neq \emptyset$, we have either $S_{\omega a} \subset B$ or $B \subset S_{\omega a}$. If the latter case never holds, then we are done. Otherwise, we have
\[
\mu(B) = \frac1\Omega f(\rho_{k,\ell}) \lambda_{S_{\omega a}}(B),
\]
where $(\omega,a)\in A_{k,\ell}'$ is chosen so that $B \subset S_{\omega a}$. Since $\Omega \geq C$ and $\rho_{k,\ell} \asymp |S_{\omega a}|$, we have
\[
\mu(B) \lesssim \frac{f(\diam B)}{C}
\]
in this case.

Now let $B_1$ be an arbitrary ball, and let $\omega$ be the longest word such that $B \subset S_\omega$. If there exist distinct $n_1,n_2\in\N$ such that $u_\omega(1/n_i) \in B_1$, then let $b_1 = \lfloor 1/\max(B_1)\rfloor$ and $b_2 = \lceil 1/\min(B_1)\rceil$ and let $B$ be as above. Then $\diam(B_1) \asymp \diam(B)$ and $B_1 \subset B$, so it follows from the previous paragraph that
\[
\mu(B_1) \lesssim \frac{f(\diam B_1)}{C} \cdot
\]
On the other hand, if there do not exist such distinct $n_1,n_2$, then by the maximality of $\omega$ there exists one such $n\in\N$ such that $u_\omega(1/n) \in B_1$. Let $n_1 = n$ and $n_2 = n+1$, and let $b_i$ be maximal such that
\[
B_1 \subset u_{n_1}([0,1/b_1]) \cup u_{n_2}([0,1/b_2]).
\]
The argument of the previous paragraph shows that for all $i=1,2$,
\[
\mu(u_{\omega n_i}([0,1/b_i])) \lesssim \frac{f(\diam u_{\omega n_i}([0,1/b_i]))}{C}
\]
and on the other hand, the maximality of $b_i$ gives
\[
\diam(u_{\omega n_i}([0,1/b_i])) \asymp \diam(B_1).
\]
It follows that
\[
\mu(B_1) \lesssim \frac{f(\diam B_1)}{C} \cdot
\]
Hence the proof of Theorem \ref{thm2} is complete.
\end{proof}

\section{Proofs of the corollaries}

\subsection{Proof of Corollary \ref{HKWWthm}}
 Let $f$ be an ESL dimension function. We show that 
 \begin{equation*}
 \sum_{\substack{k\geq 1\\ Q=2^k }}  \  \sum_{ \substack{j\geq 1, A=2^j \\ A<\Psi(Q)} } Q^2 A f \left( \frac1{Q^2 A \Psi(Q A)}\right)\asymp \sum\limits_{q}qf\left(\frac{1}{{q^2\Psi({q})}} \right) 
 \asymp\sum\limits_{q}q \log\left(\Psi(q)\right)f\left(\frac{1}{{q^2\Psi({q})}} \right).
 \end{equation*}Recall the covering argument for $\mathcal A_1(\Psi)$ 
 \begin{align*}
|J_n(a_1,\dots,a_n)|&\le \frac{1}{\Psi(a_nq_{n-1})a_nq_{n-1}^2} \ll \frac{1}{\Psi(q_n)a_nq_{n-1}^2} \asymp \frac{1}{\Psi(q_n)q_{n-1}q_n}.\end{align*}
Next following the same method as in \cite[pp. 512-13]{HKWW}, we arrive at the
\begin{align*}\label{nick}
\mathcal{H}^f\big(\mathcal A_1(\Psi)\big)\le&2\liminf_{N\to \infty}\sum_{q\ge 2^{(N-1)/2}}\sum_{\frac{q}{\Psi(q)}<p<q}f\left(\frac{1}{pq\Psi(q)}\right).\end{align*}
Following \cite{HKWW}, the restriction on $p$ follows from the fact that, since $a_n<\Psi(q_{n-1})$, $$q_n\leq (a_n+1)q_{n-1}\ll\Psi(q_{n-1})q_{n-1}< \Psi(q_{n})q_{n-1}\Longrightarrow q_{n-1}>\frac{q_n}{\Psi(q_n)}.$$
It can be seen that if
 \begin{equation*}\label{eqn1}\sum_{q=1}^\infty\sum_{\frac{q}{\Psi(q)}< p\le q}f\left(\frac{1}{pq\Psi(q)}\right)<\infty,\end{equation*}
 then it readily follows from Proposition \ref{bclem} that $\mathcal H^f\big(\mathcal A_1(\Psi)\big)=0.$ Next we compare all these series by using the condition \eqref{ffm10},
 \begin{align*}
 \sum_{q=1}^\infty q\log(\Psi(q))f\left(\frac{1}{q^2\Psi(q)}\right)&= 
 \sum_{q=1}^\infty\sum_{\frac{q}{\Psi(q)}< p\le q}\frac{q}{p}f\left(\frac{1}{q^2\Psi(q)}\right)\\ 
 & \overset{\eqref{ffm10}}{\geq}    \sum_{q=1}^\infty\sum_{\frac{q}{\Psi(q)}< p\le q}f\left(\frac{1}{pq\Psi(q)}\right)\\ 
 &\leq \sum_{q=1}^\infty\sum_{1< p\le q}f\left(\frac{1}{pq\Psi(q)}\right)\\ 
 &\overset{\eqref{equiv}}{\asymp}\sum_{q=1}^\infty qf\left(\frac{1}{q^2\Psi(q)}\right) \\ 
 &\leq \sum_{q=1}^\infty q\log(\Psi(q))f\left(\frac{1}{q^2\Psi(q)}\right).
 \end{align*}

\subsection{Proof of Corollary \ref{cor1}} 
Recall that 
\begin{equation*}\label{eqnf2}
 \sum_{\substack{k\geq 1\\ Q=2^k }}  \  \sum_{ \substack{j\geq 1, A=2^j \\ A<\Psi(Q)} } Q^2 A f \left( \frac1{Q^2 A \Psi(Q A)}\right)\asymp \sum\limits_{ j<\log_2\Psi(2^k)} 2^{2k+j}f\left(\frac{2^{-(2k+j)}}{\Psi(2^{k+j})}\right).
\end{equation*}
The proof of the corollary follows if we show that, for the dimension function $f$ satisfying \eqref{fnesl}, the following two series are equivalent
\begin{equation}\label{claim1}\sum_{\substack{k\geq 1\\ Q=2^k }}  \  \sum_{ \substack{j\geq 1, A=2^j \\ A<\Psi(Q)} } Q^2 A f \left( \frac1{Q^2 A \Psi(Q A)}\right)\asymp \sum\limits_{q}q \log\left(\Psi(q)\right)f\left(\frac{1}{{q^2\Psi({q})}} \right).
\end{equation}

Since $x^{-1}f(x)$ is quasi-monotonic, i.e. it satisfies \eqref{ff2},  we have that
$$Af\left(\frac{x}{A}\right)\leq CA^2f\left(\frac{x}{A^2}\right).$$
Using this inequality we have
\begin{align*}
\sum_{\substack{k\geq 1\\ Q=2^k }}  \  \sum_{ \substack{j\geq 1, A=2^j \\ A<\Psi(Q)} } Q^2 A f \left( \frac1{Q^2 A \Psi(Q A)}\right)
&\ll
\sum_{\substack{k\geq 1\\ Q=2^k }}  \  \sum_{ \substack{j\geq 1, A=2^j \\ A<\Psi(Q)} } Q^2 A^2 f \left( \frac1{Q^2 A^2 \Psi(Q A)}\right)   \\&\asymp  \sum_{ \substack{j\geq 1, A=2^j \\ A<\Psi(R)} }               R^2 f \left(\frac1{R^2 \Psi(R)}\right)\notag \hfill{ \ \text{by setting} \ R=QA}
\\ &\asymp  \sum\limits_{q}q \log\left(\Psi(q)\right)f\left(\frac{1}{{q^2\Psi({q})}} \right).
\end{align*}
For the reverse inequality, let $f$ satisfy the condition \eqref{fnesl}.  For convenience, let  $f(x)=x(\log(1/x))^{a_1}$ with $a_1\geq 0$. Clearly the series are equivalence for $a_1=0$, therefore we assume that $a_1>0$.
 For this, we show that $f(x)\gg Af(x/A)$, where $x= \frac1{Q^2 A^2 \Psi(Q A)}$. 

Clearly $Ax\leq 1.$
\begin{align*}
&\Longrightarrow \log (A/x)\leq \log(1/x)^2\\
&\Longrightarrow \log(A/x)^{a_1}\leq 2^{a_1}\log(1/x)^{a_1}.
\end{align*}
Thus, 
$$x(\log(1/x))^{a_1}\gg A\frac{x}{A}(\log(A/x))^{a_1}.$$
Hence,
$$
\sum_{\substack{k\geq 1\\ Q=2^k }}  \  \sum_{ \substack{j\geq 1, A=2^j \\ A<\Psi(Q)} } Q^2 A f \left( \frac1{Q^2 A \Psi(Q A)}\right)\gg
\sum_{\substack{k\geq 1\\ Q=2^k }}  \  \sum_{ \substack{j\geq 1, A=2^j \\ A<\Psi(Q)} } Q^2 A^2 f \left( \frac1{Q^2 A^2 \Psi(Q A)}\right).$$

%\begin{rem}
%We suspect that the claim \eqref{claim1} should be true for any NESL dimension functions and for that the full potential of the NESL condition \eqref{NESL} should be exploited. That is, $f$ is NESL if and only if for any $B>1$ there exists a sequence $x_n\to 0$ such that $f(Bx_n)/f(x_n)\geq B$.  Since  $x^{-1}f(x)$ is decreasing,  for any $B\geq1$, there exists a sequence $x_n\to 0$ such that $\frac{f(Bx_n)}{Bf(x_n)}\to 1.$ The main difficulty is that the sequence can be very sparse and there is no concrete information on the behaviour of $f$ for such sequences.  We refer the reader to the first version of this article on the arXiv (https://arxiv.org/abs/2010.14760, pp. 8-9) for an alternative argument that may be useful.
%%
%% In this situation, we can rule out the case $\Psi(q)>q^x$ for some $x$, thus close to the boundary of convergence/divergence we have $\Psi(q)\ll q^x$ for all $x$. That is $\Psi$ grows sub-polynomially. 
%%
%%Let $R=QA^{1/2}$.  Since $A$ is small in comparison to $Q$,  we have $Q <R<  QA < Q^2.$  So logarithmically these quantities are equivalent,  $\log(Q)  \approx \log(R)\approx \log(QA)$. Since $\Psi$ grows sub-polynomially, we have  $\Psi(Q) \approx \Psi(QA)\approx \Psi(R)$. Hence,
%%\begin{align*}\label{eqnf2}\sum_{\substack{k\geq 1\\ Q=2^k }}  \  \sum_{ \substack{j\geq 1, A=2^j \\ A<\Psi(Q)} } Q^2 A f \left( \frac1{Q^2 A \Psi(Q A)}\right)&\asymp   \sum_{\substack{k\geq 1\\ R=2^k }}  \  \sum_{ \substack{j\geq 1, A=2^j \\ A<\Psi(R)} }               R^2 f \left(\frac1{R^2 \Psi(R)}\right).\end{align*}
%\end{rem}

\subsection{Proof of  Corollary \ref{cor2}} Just like the proof of the Corollary \ref{cor1}, we show the equivalence of the series \eqref{claim1} for the approximating function $\Psi$ assuming a growth condition \eqref{condpsi}, i.e. 
 for all $ x>0$ and $Q>1$, $\Psi(Q^x)\asymp\Psi(Q)$. 
 In this case, we have
\begin{align*}\sum_{\substack{k\geq 1\\ Q=2^k }}  \  \sum_{ \substack{j\geq 1, A=2^j \\ A<\Psi(Q)} } Q^2 A f \left( \frac1{Q^2 A \Psi(Q A)}\right) &\asymp  \sum_{\substack{k\geq 1\\ Q=2^k }}  \  \sum_{ \substack{j\geq 1, A=2^j \\ A<\Psi(Q)} } Q^2 A f \left( \frac1{Q^2 A \Psi(Q A^{1/2})}\right)\\ &\asymp
 \sum_{\substack{k\geq 1\\ R=2^k }}  \  \sum_{ \substack{j\geq 1, A=2^j \\ A<\Psi(R)} }               R^2 f \left(\frac1{R^2 \Psi(R)}\right)\notag \hfill{ \ \text{by setting} \ R=QA^{1/2}}\\ &\asymp \sum\limits_{q}q \log\left(\Psi(q)\right)f\left(\frac{1}{{q^2\Psi({q})}} \right). \notag\end{align*}

\def\cprime{$'$} \def\cprime{$'$} \def\cprime{$'$} \def\cprime{$'$}
  \def\cprime{$'$} \def\cprime{$'$} \def\cprime{$'$} \def\cprime{$'$}
  \def\cprime{$'$} \def\cprime{$'$} \def\cprime{$'$} \def\cprime{$'$}
  \def\cprime{$'$} \def\cprime{$'$} \def\cprime{$'$} \def\cprime{$'$}
  \def\cprime{$'$} \def\cprime{$'$} \def\cprime{$'$} \def\cprime{$'$}
  \def\cprime{$'$}


\begin{thebibliography}{10}

\bibitem{BBH}
A.~{Bakhtawar}, P.~{Bos}, and M.~{Hussain}.
\emph{The sets of {D}irichlet non-improvable numbers versus   well-approximable numbers}, Ergodic Theory Dynam. Systems 40 (2020),
  no.~12, 3217--3235. %\MR{4170601}
  
  \bibitem{BHKW}
A.~{Bakhtawar}, M.~{Hussain}, D.~{Kleinbock}, and B-W.~{Wang}
\emph{Metrical properties for the weighted products of multiple partial quotients in continued fractions}, Preprint: arXiv: 2202.11212. 

\bibitem{BV06}
V.~Beresnevich and S.~Velani.
\newblock A mass transference principle and the {D}uffin-{S}chaeffer conjecture
  for {H}ausdorff measures.
\newblock {\em Ann. of Math. (2)}, 164(3):971--992, 2006.

\bibitem{BeDo_99}
V.~I. Bernik and M.~M. Dodson.
\newblock {\em Metric {D}iophantine approximation on manifolds}, volume 137 of
  {\em Cambridge Tracts in Mathematics}.
\newblock Cambridge University Press, Cambridge, 1999.

\bibitem{DaSc70}
H.~Davenport and W.~M. Schmidt.
\newblock Dirichlet's theorem on diophantine approximation.
\newblock In {\em Symposia {M}athematica, {V}ol. {IV} ({INDAM}, {R}ome,
  1968/69)}, pages 113--132. Academic Press, London, 1970.

\bibitem{F_14}
K.~Falconer.
\newblock {\em Fractal geometry}.
\newblock John Wiley \& Sons, Ltd., Chichester, third edition, 2014.
\newblock Mathematical foundations and applications.

\bibitem{HuangWu}
L.~Huang and J.~Wu.
\newblock Uniformly non-improvable {D}irichlet set via continued fractions.
\newblock {\em Proc. Amer. Math. Soc.}, 147(11):4617--4624, 2019.

\bibitem{HWX}
L.~Huang, J.~Wu, and J.~Xu.
\newblock Metric properties of the product of consecutive partial quotients in
  continued fractions.
\newblock {\em Israel J. Math.}, 238(2):901--943, 2020.

\bibitem{HKWW}
M.~Hussain, D.~Kleinbock, N.~Wadleigh, and B.-W. Wang.
\newblock Hausdorff measure of sets of {D}irichlet non-improvable numbers.
\newblock {\em Mathematika}, 64(2):502--518, 2018.

\bibitem{HussainSimmons2}
M.~Hussain and D.~Simmons.
\newblock A general principle for {H}ausdorff measure.
\newblock {\em Proc. Amer. Math. Soc.}, 147(9):3897--3904, 2019.

\bibitem{IosKra_02}
M.~Iosifescu and C.~Kraaikamp.
\newblock {\em Metrical theory of continued fractions}, volume 547 of {\em
  Mathematics and its Applications}.
\newblock Kluwer Academic Publishers, Dordrecht, 2002.

\bibitem{Khi_64}
A.~Y. Khinchin.
\newblock {\em Continued fractions}.
\newblock The University of Chicago Press, Chicago, Ill.-London, 1964.

\bibitem{KimKim}
T.~Kim and W.~Kim.
\newblock Hausdorff measure of sets of dirichlet non-improvable affine forms.
 {\em Advances in Mathematics}, 403:108353, 2022.

\bibitem{KlWad16}
D.~Kleinbock and N.~Wadleigh.
\newblock A zero-one law for improvements to {D}irichlet's {T}heorem.
\newblock {\em Proc. Amer. Math. Soc.}, 146(5):1833--1844, 2018.

\bibitem{KW2}
D.~Kleinbock and N.~Wadleigh.
\newblock An inhomogeneous {D}irichlet theorem via shrinking targets.
\newblock {\em Compos. Math.}, 155(7):1402--1423, 2019.

\end{thebibliography}
\end{document}